\newcommand\cddots{\raisebox{-1pt}{$\ddots$}}
\newtheorem{theorem}{Theorem}[section]
\newtheorem*{theorem*}{Theorem}
\newtheorem{lemma}[theorem]{Lemma}
\newtheorem{corollary}[theorem]{Corollary}
\theoremstyle{definition}
\newtheorem{definition}[theorem]{Definition}
\newtheorem{notation ex}[theorem]{Notation and Example}
\newtheorem{def rem}[theorem]{Definition and Remark}
\newtheorem{example}[theorem]{Example}
\newtheorem{remark}[theorem]{Remark}
\newcommand{\qcol}{\textcolor{WildStrawberry}}
\begin{document}

\title[SLP of modules over Clements-Lindstr{\"o}m rings]{The strong Lefschetz property of certain modules over Clements-Lindstr{\"o}m rings}
\author{Bek C. Chase}

\begin{abstract}
    We introduce a method for studying the Lefschetz properties for $k[x,y]$-modules based on the Lindstr{\"o}m-Gessel-Viennot Lemma. In particular, we prove that certain modules over Artinian Clements-Lindstr{\"o}m rings in characteristic zero have the strong Lefschetz property. In particular, we show that every homogeneous idea in a Clements-Lindstr{\"o}m ring of embedding dimension two has the strong Lefschetz property. As an application, we study the strong Lefschetz property of type two monomial ideals of codimension three. 
\end{abstract}
\maketitle

\section{Introduction} 
The Lefschetz properties are an active area of research in commutative algebra. Much work has been done in studying when Artinian graded $k$-algebras have the weak or strong Lefschetz properties (WLP or SLP); see \cite{survey} or \cite{book} for an overview. Less is known about the Lefschetz properties for graded modules, even in few variables. Some results have been obtained about the weak Lefschetz property for modules in two variables in \cite{modulepaper1} and \cite{modulepaper2}, but much is still unknown.


The difficulty in studying the Lefschetz properties for modules is that many of the methods used to study them for algebras do not easily apply to the module case. A useful criterion for checking the presence of the WLP and SLP in an Artinian $k$-algebra, given by Wiebe in \cite{wiebe}, is to check it for the generic initial ideal in degree reverse lexicographic order, which we denote $\text{gin}(\text{---})$: for a homogeneous ideal $J \subset S = k[x_1,\dots,x_n]$, where $k$ is an infinite field, $S/J$ has the WLP (respectively, SLP) if and only $S/\text{gin}(J)$ has the WLP (respectively, SLP). For the same to be true for modules over $S$, we would need to consider presentations of these modules before taking the generic initial ideal, which would often be impractical. If the module is a quotient $I/J$ of ideals $J \subset I$ in $S$, one might hope instead that $I/J$ has the WLP or SLP as an $S$-module if and only if $\text{gin}(I)/\text{gin}(J)$ has the WLP or SLP as an $S$-module. This is, inconveniently, not true; see Example \ref{lexex}. Another method is required.

In this paper, we consider modules of the form $I/J$, where $J \subset I$ are ideals in $S = k[x,y]$ and $J$ is an Artinian monomial complete intersection ideal, making $k[x,y]/J$ a Clements-Lindstr{\"o}m ring which itself has the strong Lefschetz property. We first review the definitions of the strong and weak Lefschetz properties in Section \ref{prelimsect} and other important facts, as well as give a few motivating examples.

Given ideals $I$ and $J$ as above, in Section \ref{mainsect} we prove our main result, that the $k[x,y]$-module $I/J$ has the SLP in characteristic zero. The key to the proof is a corollary of the Lindstr{\"o}m-Gessel-Viennot Lemma on the enumeration of non-intersecting lattice paths, used to show that the matrix of binomial coefficients representing multiplication by a linear form has maximal rank. In the last section we show that knowing such modules have the SLP can be useful in proving that some Artinian $k$-algebras of small type have the strong Lefschetz property. Essential to this application is the fact that the \textit{central simple modules} (the definition of which, owing to Harima and Watanbe, we review in Section \ref{prelimsect}) of such algebras can take the form of the modules we study in Section \ref{mainsect}. 

Throughout this paper we assume that $k$ is a field of characteristic zero.

\section{Preliminaries}\label{prelimsect}

\begin{definition}
Let $A = \bigoplus_{i=0}^c A_i$ be a standard graded Artinian $k$-algebra where $k$ is a field of characteristic zero. If $M = \bigoplus_{i=p}^q M_i$, where $M_p \neq 0$ and $M_q \neq 0$, is a finite graded $A$-module, then the Hilbert function of $M$ is the map $i \mapsto h_i := \text{dim}_k M_i$ and the Hilbert series of $M$ is the polynomial
\[
H_M(t) = \sum_i h_i t^i.
\]
If the Hilbert series of $M$ is symmetric, that is, $\text{dim}_k M_{p+i} = \text{dim}_k M_{q-i}$ for all $i$, then we call $(p+q)/2$ the \textit{reflecting degree} of $H_M(t)$, or just the reflecting degree of $M$ for convenience. If $M$ and $M'$ are two finite $A$-modules, then we say the reflecting degrees $r$ of $M$ and $r'$ of $M'$ \textit{coincide} if $r = r'$ or $r \pm 1/2 = r'$.
\end{definition}

\begin{definition}\label{LPdefn}
We say that $M = \bigoplus_{i=p}^q M_i$, a finite graded $A$-module, has the \textit{weak Lefschetz property} (WLP) if there exists a linear form $\ell \in A_1$ such that the multiplication map $\times \ell\colon M_i \rightarrow M_{i+1}$ has maximal rank for all $i$, i.e., for each $i$ it is either injective or surjective. In this case we call $\ell$ a \textit{weak Lefschetz element}. Similarly, we say that $M$ has the \textit{strong Lefschetz property} (SLP) if there exists a linear form $\ell \in A_1$ such that the multiplication map $\times \ell^d \colon M_i \rightarrow M_{i+d}$ has maximal rank for all $d>0$ and all $i$, and call such a linear form a \textit{strong Lefschetz element}. If the Hilbert series of $M$ is symmetric, then it is common to say that $M$ has the \textit{strong Lefschetz property in the narrow sense}.
\end{definition}

A useful implication of a module having the strong Lefschetz property is that its Hilbert series is unimodal (Remark 2.2, \cite{nonstd}). Just as for algebras, the converse of this is false; unlike for algebras, even in one variable modules with a unimodal Hilbert series do not necessarily have the WLP, let alone the SLP. This is evidenced by the following example.

\begin{example}\label{1varex}
Consider the $k[x]$-module $M = \frac{k[x]}{(x^2)} \oplus \frac{k[x]}{(x^2)} \left( -2 \right) $
where the grading of the second summand is shifted 2 degrees. This module has the unimodal and symmetric Hilbert series $H_M(t) = 1+t+t^2+t^3$, and each summand itself has the SLP. Yet $M$ does not have the WLP, as can be seen by looking at the multiplication map $\times (x): M_1 \to M_2$ which is neither injective nor surjective. 
\end{example}

Nonetheless, the situation over $k[x]$ is not difficult to understand, since there is only one linear form, $x$, and pathological examples are similar to the above and easy to describe using the structure theorem for finitely generated modules over a PID. The two variable case, however, merits further study, as tools used to prove algebras have the WLP or SLP can fail when we attempt to use them for modules, as illustrated in the next example.

\begin{example}\label{lexex}
    Let $I = (x^3,y^4)$ and $J = (x^5,y^5)$ be ideals in $k[x,y]$ and let $I/J$ be viewed as a $k[x,y]$-module. If we consider $G := \text{gin}(I)/\text{gin}(J)$, we will see that our naive form of Wiebe's criterion mentioned in the introduction fails here. In two variables, the generic initial ideal is a lexicographic ideal, so we may assume that $\text{gin}(I) = \text{Lex}(I) = (x^3, x^2y^2, xy^4,y^6)$ and $\text{gin}(J) = \text{Lex}(J) = (x^5,x^4y, x^3y^3, x^2y^5, xy^7,y^9)$. Because these are monomial ideals, it is enough to check if $x+y$ is a weak Lefschetz element. In degrees 4 and 5, $G$ has $k$-dimension 3, so we would expect $\times (x+y) \colon G_4 \to G_5$ to be bijective if indeed $G$ has the WLP. However, it has a nonzero kernel since $x^4(x+y) = 0$, hence $G$ has neither the WLP nor the SLP. On the other hand, $I/J$ does have the SLP as an $S$-module, which can be checked directly by hand or by computer. Furthermore, we prove in Section \ref{mainsect} that $I/J$ belongs to a class of modules which all have the strong Lefschetz property.
\end{example}

Our tool for the module setting is the Lindstr{\"o}m-Gessel-Viennot Lemma, originally given by Lindstr{\"o}m in 1973 (\cite{lindstrom}, Lemma 1) and later proved independently by Gessel and Viennot in 1989 (\cite{GV}, Theorem 1). This lemma gives a combinatorial interpretation for the determinant of a matrix of binomial coefficients as a count of non-intersecting lattice paths. The following corollary by Gessel and Viennot (\cite{GVcor}) is a consequence of the lemma. Note that for our purposes, we say that $\binom{a_i}{b_i} = 0$ if $b_i <0$, so in Corollary \ref{gvcor} we have $\binom{a_i}{b_i} \neq 0$ if and only if $0 \leq b_i \leq a_i$.  

\begin{corollary}[Corollary 2, \cite{GVcor}]\label{gvcor}
    Let $a_1 < a_2 < \dots <a_m$ and $b_1 < b_2 < \dots < b_m$ be two sets of integers. Then the determinant of the matrix $M = (m_{ij})$ of binomial coefficients $m_{ij} = \binom{a_i}{b_j}$ is nonnegative, and is positive if and only if $\binom{a_i}{b_i} \neq 0$ for each $i$. 
\end{corollary}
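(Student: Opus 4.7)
The plan is to apply the Lindström--Gessel--Viennot lemma with sources and sinks chosen so that $\binom{a_i}{b_j}$ is literally the number of monotone east/north lattice paths from the $i$-th source to the $j$-th sink. A convenient configuration is to place sources on the negative $y$-axis at $A_i=(0,-a_i)$ and sinks along the antidiagonal at $B_j=(b_j,-b_j)$. A lattice path from $A_i$ to $B_j$ using unit east and north steps requires $b_j$ east steps and $a_i-b_j$ north steps, so the number of such paths is $\binom{a_i}{b_j}$, and zero unless $0\le b_j\le a_i$, matching the paper's convention.

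Next I would check that this configuration forces any non-intersecting path system to realize the identity permutation. Suppose $i<j$ and a path system routes $A_i\to B_l$, $A_j\to B_k$ with $k<l$. Because $a_i<a_j$, the point $A_i$ lies strictly above $A_j$, while because $k<l$, the point $B_k$ lies strictly above-left of $B_l$; hence the path out of $A_i$ starts above but must end strictly below the path out of $A_j$, and two monotone east/north paths with swapped vertical order must share a lattice vertex. The LGV lemma then collapses to
\[
\det M \;=\; \sum_\sigma \operatorname{sgn}(\sigma)\,N_\sigma \;=\; N_{\mathrm{id}},
\]
where $N_{\mathrm{id}}\ge 0$ counts non-intersecting systems realizing the identity matching, which already proves nonnegativity.

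For the positivity criterion, if some $\binom{a_i}{b_i}=0$ then no path exists from $A_i$ to $B_i$, so $N_{\mathrm{id}}=0$ and $\det M=0$. Conversely, if $\binom{a_i}{b_i}\neq 0$ for all $i$ (equivalently $0\le b_i\le a_i$), I would exhibit one explicit non-intersecting system: let $P_i$ be the greedy \emph{L-shape} that goes east from $A_i$ along $y=-a_i$ to $(b_i,-a_i)$ and then north along $x=b_i$ to $B_i$. Since $a_1<\cdots<a_m$ and $b_1<\cdots<b_m$, the horizontal legs of distinct $P_i$ lie on distinct horizontal lines and the vertical legs on distinct vertical lines; the remaining possible interactions (horizontal leg of $P_i$ meeting vertical leg of $P_j$ and vice versa) are ruled out respectively by $b_j>b_i$ and by $-a_j<-a_i\le -b_i$. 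Hence $N_{\mathrm{id}}\ge 1$ and $\det M>0$.

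The main obstacle is the middle step: verifying that the source/sink configuration rules out every non-identity permutation among non-intersecting systems. This rests on the standard topological fact that two monotone east/north paths whose endpoints are interleaved in opposite vertical orders must share a vertex. Once this is in hand, both claims of the corollary drop out immediately from LGV plus the explicit greedy construction.
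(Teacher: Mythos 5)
The paper does not prove this statement at all: it is quoted verbatim as Corollary~2 of Gessel--Viennot \cite{GVcor}, so there is no in-paper argument to compare against. Your derivation is the standard (and correct) one, and in fact your source/sink configuration --- sources $A_i=(0,-a_i)$ on a vertical line, sinks $B_j=(b_j,-b_j)$ on the antidiagonal --- is exactly the classical Gessel--Viennot setup for binomial determinants, so you are essentially reconstructing the cited proof. The path count $\binom{a_i}{b_j}$, the collapse of the LGV sum to $N_{\mathrm{id}}$, and the L-shaped witness system under $0\le b_i\le a_i$ all check out. The one step you leave as ``standard'' (two monotone NE paths whose endpoints are interleaved in opposite vertical orders must share a lattice point) is genuinely routine here: letting $m_P(x)$ and $M_P(x)$ denote the least and greatest heights of a path $P$ over abscissa $x$, one shows by induction on $x$ that a disjoint pair would satisfy $M_Q(x)<m_P(x)$ throughout the common range (using $m_Q(x+1)=M_Q(x)$ and that $Q$ fills the integer interval $[m_Q(x),M_Q(x)]$), which contradicts $M_Q(b_k)=-b_k>-b_l\ge M_P(b_k)$ at the sink of $Q$. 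With that filled in, your argument is a complete and self-contained proof; the only cosmetic caveat is that, as in the paper's own convention, the binomial coefficients are implicitly taken to vanish outside $0\le b\le a$, which presupposes the $a_i$ are nonnegative so that the lattice-path interpretation applies.
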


We now recall the notion of central simple modules (CSMs) and the characterization of the strong Lefschetz property in terms of CSMs. For more details, refer to \cite{central-simple}, \cite{nonstd}, and \cite{commutator}.

\begin{definition}
    Let $A$ be a standard graded Artinian $k$-algebra and $\ell \in A_1$ a linear form with $r$ the smallest positive integer for which $\ell^r = 0$. Then we have the following sequence 
    \[
    A = (0:\ell^r)+(\ell)  \supset (0:\ell^{r-1}) +(\ell) \supset \dots \supset (0:\ell)+(\ell) \supset (0:\ell^0)+(\ell) = (\ell).
    \]
    The $i$th \textit{ central simple module} $V_{i,\ell}$ of $A$ with respect to the linear form $\ell$ is defined as the $i$th nonzero successive quotient of this sequence, so it is of the form
    \[
    V_{i,\ell} = \frac{(0:\ell^{f_i})+(\ell)}{(0:\ell^{f_{i}-1})+(\ell)}
    \]   
    where $r \geq f_i > f_{i+1} \geq 1$ for all $i$. Note that $V_{1,\ell} = A/{(0:\ell^{r-1})+(\ell)}$. We also define $\widetilde{V}_{i,\ell} = V_{i,\ell} \otimes k[t]/(t^{f_i})$ and $\widetilde{V} = \bigoplus_i \widetilde{V}_{i,\ell}$. Then, viewing $\widetilde{V}_{i,\ell}$ and $K[t]/(t^{f_i})$ as $k$-vector spaces, we have \[H_{\widetilde{V}_{i,\ell}}(t) = H_{V_{i,\ell}}(t)(1+t+\dots + t^{f_i -1})\]
    which is symmetric if and only if $H_{V_{i,\ell}}(t)$ is symmetric (Remark 2.2(1), \cite{nonstd}).
\end{definition}

\begin{theorem}[Theorem 5.2, \cite{nonstd}]\label{hw-csm-thm}
Let $A$ be a graded Artinian $k$-algebra with a symmetric Hilbert series. Then the following are equivalent 
\begin{enumerate}
    \item $A$ has the SLP in the narrow sense. 
    \item There exists a linear form $\ell \in A_1$ such that $V_{i,\ell}$ has the SLP in the narrow sense and the reflecting degree of $\widetilde{V}_{i,\ell}$ coincides with that of $A$ for all $i$. 
\end{enumerate} 
\end{theorem}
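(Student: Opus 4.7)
The plan is to exploit a graded vector space decomposition of $A$ compatible with multiplication by $\ell$. Specifically, by choosing lifts to $A$ of bases of each $V_{i,\ell}$ and tracking them under multiplication by successive powers of $\ell$, one obtains an isomorphism of graded $k$-vector spaces
\[
A \;\cong\; \bigoplus_i \widetilde{V}_{i,\ell}
\]
(with appropriate degree shifts) under which multiplication by $\ell$ on $A$ corresponds to multiplication by $t$ on the tensor factor $k[t]/(t^{f_i})$ of each summand. This is essentially the Jordan decomposition of $A$ as a $k[\ell]$-module, with the $V_{i,\ell}$ playing the role of the ``heads'' of the Jordan blocks of size $f_i$. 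Establishing this decomposition, and reconciling the degree shifts so that the reflecting degrees line up correctly, is the main technical starting point.

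For $(1) \Rightarrow (2)$, take $\ell$ to be a strong Lefschetz element on $A$. The maximal-rank condition on multiplication by $\ell^d$ forces the Jordan blocks of $\ell$, i.e., the summands $\widetilde V_{i,\ell}$, to be placed symmetrically around the reflecting degree of $A$; this gives the reflecting degree coincidence. Then, since the $V_{i,\ell}$ are quotients built from the filtration $(0 : \ell^k) + (\ell)$, I would pick a generic linear form $\ell' \in A_1$ whose image in each $V_{i,\ell}$ simultaneously realizes the SLP on that quotient, using the fact that the SLP is a Zariski-open condition on $A_1$ (combined with char.\ $0$).

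For $(2) \Rightarrow (1)$, I would form a generic linear combination $c\ell + \ell'$, where $\ell'$ is a common strong Lefschetz element for all the $V_{i,\ell}$. Under the decomposition above, multiplication by $(c\ell + \ell')^d$ can be analyzed ``summand by summand,'' and the coincidence of reflecting degrees guarantees that the injectivity/surjectivity on each $\widetilde V_{i,\ell}$ patches together to give maximal rank on $A_i \to A_{i+d}$ for every $i$ and $d$. The symmetry of the Hilbert series then upgrades the property to SLP in the narrow sense.

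The main obstacle is the $\ell$-compatible decomposition of $A$ and the accompanying bookkeeping of degree shifts: one must verify that the lifts of bases of the $V_{i,\ell}$ really do span $A$ freely under $\ell$-multiplication, which requires that no new ``head'' elements appear beyond those captured by the CSM filtration. Once this is in hand, both implications reduce to genericity arguments about linear forms. A secondary difficulty is that the strong Lefschetz element for $A$ and the strong Lefschetz element for the CSMs need not be the same form, so the genericity step must be made uniform across all summands, which is why characteristic zero (equivalently, the infinite residue field assumption giving non-empty Zariski opens) is essential.
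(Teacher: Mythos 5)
The paper does not actually prove this statement; it is imported verbatim from Harima--Watanabe (Theorem 5.2 of \cite{nonstd}) and used as a black box, so there is no in-paper proof to compare against. Your structural picture is the correct one: a direct computation with the filtration $(0:\ell^{f})+(\ell)$ shows that $V_{i,\ell}$ is exactly the graded vector space of heads of the Jordan blocks of $\times\ell$ of length $f_i$, so $A\cong\bigoplus_i\widetilde V_{i,\ell}$ as graded $k[\ell]$-modules, and for a strong Lefschetz element each block must be centered at the reflecting degree of $A$, which is where the coincidence condition comes from. But both implications, as you sketch them, have genuine gaps. In $(1)\Rightarrow(2)$ the substantive claim is that each $V_{i,\ell}$ \emph{has} the SLP, and your appeal to Zariski-openness is circular: openness of the strong Lefschetz locus in $\mathbb{A}(A_1)$ only upgrades ``some linear form works'' to ``a generic one works''; it cannot produce non-emptiness, which is exactly what has to be proved here. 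You give no argument that any linear form at all is a strong Lefschetz element for $V_{i,\ell}$.

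In $(2)\Rightarrow(1)$, the decomposition $A\cong\bigoplus_i\widetilde V_{i,\ell}$ is an isomorphism of graded $k[\ell]$-modules only, not of $A$-modules: a second linear form $\ell'$ (or $c\ell+\ell'$) does not preserve the summands, so ``analyzing summand by summand'' is not available. This is precisely why the standard argument --- and the paper's own use of this circle of ideas in Theorems \ref{csm-nonsym} and \ref{assgr} --- routes through the associated graded ring $Gr_{\ell}(A)$: there the $\widetilde V_{i,\ell}$ arise as successive quotients of a multiplication-stable filtration, one shows that $\widetilde V$ having the SLP forces $Gr_{\ell}(A)$ to have the SLP by a rank-additivity argument along the filtration (this is where the reflecting-degree coincidence is actually used, in the spirit of Remark \ref{directsumlp}), and then one still needs the separate, nontrivial transfer from $Gr_{\ell}(A)$ back to $A$ (Theorem \ref{assgr}, proved via Lemma \ref{r/i tensor}). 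Your sketch collapses these two steps into a direct-sum argument on $A$ itself that does not exist, so as written the proposal does not constitute a proof.
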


In the situation that $A$ does not have a symmetric Hilbert series, we still have the following weaker version of Theorem \ref{hw-csm-thm}.

\begin{theorem}\label{csm-nonsym} 
Let $A$ be a graded Artinian $k$-algebra. Then $A$ has the SLP if there exists a linear form $\ell \in A_1$ such that $\widetilde{V}$ has the SLP.  
\end{theorem}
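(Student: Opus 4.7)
The plan is to show that $\widetilde V$ is the associated graded module of $A$ for the decreasing filtration $B_j := (0:_A \ell^j) + (\ell)$, and then transport a strong Lefschetz element of $\widetilde V$ to one on $A$ by a rank-semicontinuity argument. Note that $\ell \cdot B_j \subset B_{j-1}$, so each $V_{i,\ell} = B_{f_i}/B_{f_i-1}$ is annihilated by $\ell$ and is naturally an $A/(\ell)$-module.

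First I would produce a graded $k$-linear isomorphism $\phi \colon \widetilde V \to A$. For each $V_{i,\ell}$ choose a homogeneous $k$-basis $\{v_{i,\alpha}\}$, and lift each $v_{i,\alpha}$ to $\hat v_{i,\alpha} \in (0:_A \ell^{f_i}) \subset B_{f_i}$; such a lift exists because any element of $B_{f_i}$ differs from one in $(0:_A \ell^{f_i})$ by an element of $(\ell) \subset B_{f_i - 1}$. Form the strings $\{\ell^a \hat v_{i,\alpha} : 0 \le a \le f_i - 1\}$. A filtration-by-filtration argument shows these strings are $k$-linearly independent, and their total count equals $\sum_i f_i \dim_k V_{i,\ell} = \dim_k A$, so they form a homogeneous basis of $A$. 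Setting $\phi(v_{i,\alpha} \otimes t^a) := \ell^a \hat v_{i,\alpha}$ gives the desired isomorphism; in particular $H_A = H_{\widetilde V}$.

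Second, give $\widetilde V$ its natural $(A/(\ell))[t]$-module structure, with $t$ acting via the tensor factor, and let $L$ be a strong Lefschetz element of $\widetilde V$. Lift $L$ to $\widetilde L \in A_1$ by choosing any section of $A_1 \twoheadrightarrow (A/(\ell))_1$ and replacing $t$ by $\ell$. I would then verify that, in the basis of Step 1 ordered by descending $B_\bullet$-level, the matrix of $\times \widetilde L \colon A_i \to A_{i+1}$ equals the matrix of $\times L \colon \widetilde V_i \to \widetilde V_{i+1}$ (via $\phi$) plus a block strictly lower-triangular correction: the $\ell$-part automatically lowers the $B_\bullet$-index, and for the $A/(\ell)$-component the only discrepancy between the action on $A$ and the prescribed action on $V_{i,\ell}$ comes from the ambiguity of the lift, which lies in $(\ell)$ and hence in strictly lower filtration. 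This structure is preserved under composition, so the same decomposition holds for every power $\times \widetilde L^d$. Lower-semicontinuity of rank under triangular perturbation then yields $\operatorname{rank}(\times \widetilde L^d \colon A_i \to A_{i+d}) \ge \operatorname{rank}(\times L^d \colon \widetilde V_i \to \widetilde V_{i+d})$ for all $d$ and $i$. Combined with $H_A = H_{\widetilde V}$ and the SLP of $\widetilde V$, this forces $\times \widetilde L^d$ to have maximal rank on $A$ at every step, so $A$ has the SLP with respect to $\widetilde L$.

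The main obstacle is the triangularity claim in the second step: one must track, in the adapted basis, how multiplication by different lifts to $A$ of a linear form on $\widetilde V$ interacts with the $B_\bullet$-filtration, and check that every discrepancy strictly lowers the filtration index. This is where both the stability $\ell B_j \subset B_{j-1}$ and the annihilation of $V_{i,\ell}$ by $\ell$ are essential. Once the triangularity is in place, the rank-semicontinuity argument and the Hilbert function match complete the proof.
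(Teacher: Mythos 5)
Your Step 1 is sound: the strings $\{\ell^a\hat v_{i,\alpha}\}$ do form a homogeneous basis of $A$ (the count $\sum_i f_i\dim_k V_{i,\ell}=\dim_k A$ and the filtration argument for independence both check out), so $H_A=H_{\widetilde V}$; this is essentially Harima--Watanabe's structure theorem for $Gr_{(\ell)}(A)$. The gap is the triangularity claim in Step 2. Write $\widetilde L=L_0+c\ell$. Then $\widetilde L\cdot\ell^a\hat v_{i,\alpha}=\ell^a(L_0\hat v_{i,\alpha})+c\,\ell^{a+1}\hat v_{i,\alpha}$, and $L_0\hat v_{i,\alpha}$ differs from the chosen lift of $\bar L_0 v_{i,\alpha}\in V_{i,\ell}$ by an element $w\in B_{f_i-1}=(0:\ell^{f_i-1})+(\ell)$. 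Expanding $\ell^a w$ in the string basis produces two kinds of correction terms: terms $\ell^a\hat v_{j,\gamma}$ with $f_j<f_i$ (these do strictly lower the CSM index, as you say), but also terms $\ell^{b}\hat v_{j,\gamma}$ with $b\ge a+1$ and $f_j$ \emph{arbitrary} --- in particular $f_j\ge f_i$ occurs, coming from the $(\ell)$-component of $w$. Now, the inequality $\operatorname{rank}(M+N)\ge\operatorname{rank}(M)$ is not a consequence of $N$ being strictly block lower triangular alone (e.g.\ $M=\left(\begin{smallmatrix}1&1\\1&2\end{smallmatrix}\right)$, $N=\left(\begin{smallmatrix}0&0\\1&0\end{smallmatrix}\right)$ drops the rank); it requires $M$ to be block \emph{diagonal} for the same block decomposition. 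Since $\times L$ on $\widetilde V$ mixes the $t$-degrees inside each $\widetilde V_{i,\ell}$, the blocks are forced to be the whole $\widetilde V_{i,\ell}$'s, and the second kind of correction term then connects the block of $i$ to blocks $j$ with $f_j\ge f_i$, so no ordering of the blocks makes the correction strictly triangular. The claim that the structure persists under taking powers is harmless once the degree-one structure is in place, but the degree-one structure is exactly what is missing.

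This obstruction is why the paper does not attempt a direct rank comparison between $\times\widetilde L^{\,d}$ on $A$ and $\times L^d$ on $\widetilde V$. Its route is: first pass from $\widetilde V$ to $Gr_{(\ell)}(A)$ (citing Theorem 2.7 of \cite{nonstd}, which does not need a symmetric Hilbert series), and then prove that $Gr_{(\ell)}(A)$ having the SLP forces $A$ to have the SLP (Theorem \ref{assgr}) via the tensor trick of Lemma \ref{r/i tensor}: $A$ has the SLP if and only if $A\otimes_k k[z]/(z^c)$ has the WLP for every $c$. This reduces the comparison of all the maps $\times\widetilde L^{\,d}$ to a comparison of a single degree-one multiplication on the tensored algebra, where the filtration argument does work. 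To salvage your approach you would either have to justify the rank inequality for the genuinely bi-filtered structure above (which I do not believe holds termwise), or insert the SLP-to-WLP reduction by tensoring with $k[z]/(z^c)$ --- at which point you have reconstructed the paper's proof.
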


Theorem \ref{csm-nonsym} is a consequence of the fact that, regardless of whether or not $A$ has a symmetric Hilbert series, the associated graded ring $Gr_{(l)}(A)$, which we define below, has the SLP if $\widetilde{V}$ does. For a proof of this, we refer the reader to Theorem 2.7 in \cite{nonstd}, noting that the relevant portion of the proof does not require the Hilbert series to be symmetric. The result then follows from Theorem \ref{assgr} below. Note that our Theorem \ref{assgr} is a generalization of Theorem 4.6 in \cite{nonstd} and has a similar proof.

\begin{remark}\label{directsumlp}
    Suppose $M$ and $N$ are modules over a ring $R$ which have the same strong Lefschetz element $\ell$. Then $\ell$ is a strong Lefschetz element for $M \oplus N$ if the multiplication maps $\times \ell^d \colon M_i \to M_{i+d}$ and $\times \ell^d \colon N_i \to N_{i+d}$ are either both injective or both surjective for all $i$ and for all $d$. In particular, if the Hilbert series of $M$ and of $N$ are both symmetric, $M \oplus N$ has the strong Lefschetz property if and only if the reflecting degrees of $M$ and $N$ coincide.
\end{remark}

\begin{definition}
    Let $A = \bigoplus_{i=0}^c A_i$ be an Artinian graded $k$-algebra and let $l\in A_1$ with $r$ the least integer such that $l^r = 0$. Recall that the \textit{associated graded ring} of $A$ with respect to $l$ is defined as 
    $$Gr_{l}(A) = A/(l) \oplus (l)/(l^2) \oplus (l^2)/(l^3) \oplus\dots\oplus (l^{r-1})/(l^r).$$
\end{definition}

We also need the following lemma. 

\begin{lemma}[Remark 3.1.7, \cite{lindsey-thesis}]\label{r/i tensor}
Let $I$ be a homogeneous ideal of $R$. Then $R/I$ has the strong Lefschetz property if and only if $R/I \otimes_k k[z]/(z^{c})$ has the weak Lefschetz property for all $c \geq 0$.
\end{lemma}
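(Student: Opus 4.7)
The plan is to identify the kernel of multiplication by $\ell + z$ on the graded pieces of $B := (R/I) \otimes_k k[z]/(z^c)$ with the kernel of multiplication by $\ell^c$ on a suitable shift of $R/I$. Writing a typical degree-$i$ element of $B$ as $\sum_{j=0}^{c-1} v_j \otimes z^j$ with $v_j \in (R/I)_{i-j}$, I would impose $(\ell + z) \cdot v = 0$ and collect the coefficients of $1, z, \ldots, z^{c-1}$ to obtain $\ell v_0 = 0$ together with the recursion $v_{j-1} = -\ell v_j$ for $1 \leq j \leq c-1$. Iterating writes every $v_j$ in terms of $v_{c-1}$ and collapses the system to the single equation $\ell^c v_{c-1} = 0$, yielding $\ker\bigl(\times(\ell+z) \colon B_i \to B_{i+1}\bigr) \cong \ker\bigl(\times \ell^c \colon (R/I)_{i-c+1} \to (R/I)_{i+1}\bigr)$.

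A short telescoping count gives $\dim_k B_i - \dim_k B_{i+1} = \dim_k (R/I)_{i-c+1} - \dim_k (R/I)_{i+1}$, which combined with the kernel identification yields the key equivalence: $\times(\ell + z) \colon B_i \to B_{i+1}$ has maximal rank if and only if $\times \ell^c \colon (R/I)_{i-c+1} \to (R/I)_{i+1}$ has maximal rank. The rest of the argument is then formal.

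For the forward direction, if $\ell$ is a strong Lefschetz element of $R/I$, then every $\times \ell^c$ has maximal rank in every degree, so $\ell + z$ witnesses WLP of $B$ for each $c$. For the converse, assume $B$ has WLP for every $c$. The set of weak Lefschetz elements of $B$ is a nonempty Zariski open subset of the affine space $(R/I)_1 \oplus kz$, hence dense, and therefore meets the open locus where the $z$-coefficient is nonzero; after rescaling, a weak Lefschetz element takes the form $\ell_c + z$. By the equivalence, the set $U_c := \{\ell \in (R/I)_1 : \times \ell^c \text{ has maximal rank in every degree}\}$ is open and nonempty, hence dense in $(R/I)_1$. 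Since $R/I$ is Artinian, $U_c$ equals all of $(R/I)_1$ whenever $c$ exceeds the top nonzero degree of $R/I$, so only finitely many $U_c$ impose nontrivial conditions, and $\bigcap_c U_c$ is a nonempty open. Any element of this intersection is a strong Lefschetz element of $R/I$.

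The main obstacle is the kernel computation itself, as it drives the entire argument; a subtle supporting point in the converse is that WLP of $B$ only a priori produces a weak Lefschetz element somewhere in $(R/I)_1 \oplus kz$, and one must ensure such an element can be taken with nonzero $z$-component so that the kernel identification applies. Both concerns are handled by the direct calculation and by a standard density argument in affine space, respectively.
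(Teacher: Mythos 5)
The paper does not prove this lemma; it is quoted verbatim from Remark 3.1.7 of Lindsey's thesis with only a citation, so there is no in-paper argument to compare against. Your proof is correct and is essentially the standard argument for this classical equivalence (going back to Harima--Watanabe): the computation showing $\ker\bigl(\times(\ell+z)\colon B_i \to B_{i+1}\bigr) \cong \ker\bigl(\times\ell^c\colon (R/I)_{i-c+1}\to (R/I)_{i+1}\bigr)$ via the recursion $v_{j-1}=-\ell v_j$ checks out, the telescoping dimension count is right, and the openness/irreducibility argument correctly upgrades ``for each $c$ some $\ell_c$ works'' to a single strong Lefschetz element, using that only finitely many of the sets $U_c$ are proper. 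The only points worth making explicit are the standing hypotheses you use silently: $R/I$ must be Artinian (so that there are finitely many degrees to check and $U_c$ is all of $(R/I)_1$ for large $c$) and $k$ must be infinite (so nonempty Zariski opens are dense); both hold in the paper's setting, where $k$ has characteristic zero and the lemma is applied to Artinian algebras.
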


\begin{theorem}\label{assgr}
    Let $A$ be a graded Artinian $k$-algebra. Then A has the WLP (respectively SLP) if and only if $Gr_{l}(A)$ has the WLP (respectively SLP) for some linear form $l \in A$. 
\end{theorem}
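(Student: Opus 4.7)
The plan is to prove the SLP assertion; the WLP case is the special case $d = 1$ of the same arguments. The cornerstone observation is that $A$ and $Gr_l(A)$ share the same Hilbert function (because $\dim A_j = \sum_{i \geq 0} \dim ((l^i)/(l^{i+1}))_j$), so ``maximal rank'' is numerically the same condition on each side.

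\emph{Forward direction.} Suppose $\ell$ is a strong Lefschetz element of $A$ and take $l = \ell$. I will show that $\ell$, viewed as the canonical element of $(\ell)/(\ell^2) \subseteq Gr_\ell(A)_1$, is strong Lefschetz on $Gr_\ell(A)$. The map $\times \ell^d$ on $Gr_\ell(A)$ decomposes as a direct sum, over $i$, of the surjections $(\ell^i)/(\ell^{i+1})_j \twoheadrightarrow (\ell^{i+d})/(\ell^{i+d+1})_{j+d}$, so its image has dimension
\[
\sum_{i \geq d} \dim ((\ell^i)/(\ell^{i+1}))_{j+d} = \dim (\ell^d)_{j+d}.
\]
But $(\ell^d)_{j+d} = \ell^d \cdot A_j$ is precisely the image of $\times \ell^d$ on $A_j$, so the two multiplication maps have equal rank in every degree. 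Since $\ell$ is strong Lefschetz on $A$ this rank is maximal, and maximal is the same numerical condition on both sides, so $\ell$ is strong Lefschetz on $Gr_\ell(A)$.

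\emph{Reverse direction.} Suppose $Gr_l(A)$ has the SLP with element $m = \bar{m_0} + \alpha l \in Gr_l(A)_1$, where $m_0 \in A_1$ lifts $\bar{m_0} \in (A/(l))_1$. Choose a $k$-basis $\{a_\beta\}$ of $A$ adapted to the $(l)$-adic filtration, so each $a_\beta$ has a well-defined filtration level $i(\beta)$, and write $c_{\gamma, \alpha, \beta}$ for the structure constants of $A$. Build a $k[t]$-algebra deformation $\tilde A$ whose underlying $k[t]$-module is $A \otimes_k k[t]$, with multiplication
\[
a_\alpha \cdot_t a_\beta = \sum_\gamma c_{\gamma, \alpha, \beta}\, t^{\,i(\gamma) - i(\alpha) - i(\beta)}\, a_\gamma.
\]
The exponents are nonnegative because $a_\alpha a_\beta \in (l^{i(\alpha) + i(\beta)})$ in $A$, and associativity reduces to that of $A$ via a direct check on structure constants. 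The rescaling $a_\beta \mapsto t^{i(\beta)} a_\beta$ is an algebra isomorphism $A \xrightarrow{\sim} \tilde A_t$ for each $t \neq 0$, while the fiber $\tilde A_0$ is, tautologically, $Gr_l(A)$. Now let $m' = m_0 + \alpha l \in A_1 \subset \tilde A_1$. A direct specialization check shows that the $k[t]$-linear map $\times (m')^d \colon \tilde A_j \to \tilde A_{j+d}$ reduces at $t = 0$ to $\times m^d$ on $Gr_l(A)$ and at $t = 1$ to $\times (m')^d$ on $A$. Lower semicontinuity of rank for a map between finitely generated free $k[t]$-modules yields
\[
\mathrm{rank}(\times (m')^d \text{ on } A_j) \geq \mathrm{rank}(\times m^d \text{ on } Gr_l(A)_j).
\]
The right-hand side is maximal by hypothesis, and the common upper bound $\min(\dim A_j, \dim A_{j+d})$ is identical for the two sides, so the left-hand side is also maximal; hence $m'$ is a strong Lefschetz element of $A$.

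The main obstacle is setting up the deformation $\tilde A$ and verifying that the twisted product $\cdot_t$ is a flat, associative $k[t]$-algebra structure; both follow from the filtered structure on $A$ and the standard behavior of structure constants under $(l)$-adic grading, so the real content of the argument is the rank equality in the forward direction and the semicontinuity step in the reverse direction.
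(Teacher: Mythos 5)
Your argument takes a genuinely different route from the paper's. The paper imports the WLP case and the forward implication from Theorem 4.6 of \cite{nonstd}, and then obtains the reverse SLP implication by tensoring with $k[z]/(z^c)$ and applying Lemma \ref{r/i tensor} to reduce the SLP to the WLP; you instead give a self-contained proof. Your forward direction is correct: multiplication by the class of $\ell$ in $(\ell)/(\ell^2)$ does decompose as a direct sum of the surjections $[(\ell^i)/(\ell^{i+1})]_j \twoheadrightarrow [(\ell^{i+d})/(\ell^{i+d+1})]_{j+d}$ (surjective because $\ell^d\cdot(\ell^i)_j=(\ell^{i+d})_{j+d}$), the telescoping sum gives $\mathrm{rank}=\dim[(\ell^d)]_{j+d}=\mathrm{rank}(\times\ell^d\colon A_j\to A_{j+d})$, and since $A$ and $Gr_\ell(A)$ have the same Hilbert function, maximal rank transfers. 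The Rees-type degeneration in the reverse direction is also set up correctly, and this self-contained route is more informative than the citation-plus-tensor argument in the paper, since it exhibits the shape of the Lefschetz element.

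There is, however, one genuine error in the last step of your reverse direction. Lower semicontinuity of rank for a matrix $M(t)$ over $k[t]$ compares the generic point with closed points: it gives $\mathrm{rank}_{k(t)}M(t)\ge\mathrm{rank}\,M(\lambda)$ for every $\lambda\in k$, but it does not let you compare two closed fibers, so the displayed inequality $\mathrm{rank}(\times(m')^d\colon A_j\to A_{j+d})\ge\mathrm{rank}(\times m^d\colon Gr_l(A)_j\to Gr_l(A)_{j+d})$, which compares the fiber at $t=1$ with the fiber at $t=0$, is unjustified; consider $M(t)=[\,t-1\,]$, which has rank $1$ at $t=0$ and rank $0$ at $t=1$. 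Indeed $m'=m_0+\alpha l$ itself need not be a strong Lefschetz element of $A$. What you do get is that $\mathrm{rank}_{k(t)}M(t)$ is maximal; your rescaling isomorphism identifies the generic fiber with $A\otimes_k k(t)$ carrying the linear form $m_0+\alpha t^{-1}l$, so for each of the finitely many relevant pairs $(j,d)$ the set of $s\in k$ for which $\times(m_0+sl)^d\colon A_j\to A_{j+d}$ fails to have maximal rank is finite. Since $k$ is infinite, a general $s$ works for all $(j,d)$ simultaneously, and $m_0+sl$ is a strong Lefschetz element of $A$. This existence statement is all the theorem asserts, so with that one step repaired your proof is complete.
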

\begin{proof}
    For details of the proof for the WLP and the forward implication, see  Theorem 4.6 of \cite{nonstd}, noting that again it is not required for $A$ to have a symmetric Hilbert series in that portion of the proof. Here we prove only that if $Gr_{l}(A)$ has the SLP then $A$ has the SLP.

    Let $t$ be an indeterminate and define $\widetilde{A} = A \otimes_k k[z]/(z^{c})$ where $c$ is any positive integer. Then we have $$Gr_{l}(\widetilde{A}) \cong Gr_{l}(A) \otimes_k k[z]/(z^{c})$$
    thus it follows from Lemma \ref{r/i tensor} that $Gr_{l}(\widetilde{A})$ has the WLP, hence $\widetilde{A}$ has the WLP. Since this holds for any $c$, $A$ has the SLP, again by Lemma \ref{r/i tensor}. 
\end{proof}

\section{A class of modules with the SLP in 2 variables}\label{mainsect}

In this section we prove our main results pertaining to the strong Lefschetz property for $k[x,y]$-modules.

\begin{theorem}\label{mainmodulethm}
    Let $J = (x^a,y^b)$ be an ideal in $S = k[x,y]$ and $I$ a monomial ideal in $S$ such that $J \subset I$. Then $I/J$ has the SLP as an $S$-module.
\end{theorem}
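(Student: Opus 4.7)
The plan is to show that $\ell = x + y$ is a strong Lefschetz element for $I/J$ (any generic linear form works equally well in characteristic zero). Because $I$ is a monomial ideal containing $J = (x^a, y^b)$, the degree-$i$ piece $(I/J)_i$ has a monomial basis indexed by $B_i := \{p \in \mathbb{Z} : x^p y^{i-p} \in I,\ p < a,\ i-p < b\}$. Writing the generators of $I$ as $x^{a_g} y^{b_g}$, the set $B_i$ is the union $\bigcup_g [a_g,\, i-b_g]$ intersected with the Artinian cutoff $[\max(0,\, i-b+1),\, \min(a-1,\, i)]$, i.e. a (possibly disconnected) union of finitely many intervals whose endpoints are controlled by the generators of $I$ and by $J$.

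In this monomial basis, the matrix of $\times \ell^d \colon (I/J)_i \to (I/J)_{i+d}$ has $(p,q)$-entry $\binom{d}{q-p}$ (with the usual convention that this is zero for $q-p$ outside $[0,d]$); this follows from the binomial expansion of $(x+y)^d$ together with the observation that every monomial appearing in $(x+y)^d \cdot x^p y^{i-p}$ already lies in $I$. Interpreting $\binom{d}{q-p}$ as the number of east--north lattice paths from $(-p,p)$ to $(d-q,q)$, both the source- and target-vertex sequences move northwest as their index increases, so the only non-crossing pairing is the identity and the Lindstr\"om--Gessel--Viennot framework of Corollary \ref{gvcor} applies: the determinant of any square submatrix indexed by $p_{s_1} < \cdots < p_{s_k}$ and $q_{t_1} < \cdots < q_{t_k}$ is nonnegative, and is strictly positive exactly when $p_{s_j} \le q_{t_j} \le p_{s_j} + d$ for every $j$.

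It therefore suffices to exhibit a square submatrix of size $\min(|B_i|, |B_{i+d}|)$ satisfying these ``diagonal'' inequalities. When $|B_i| \le |B_{i+d}|$ the task is to choose an order-preserving injection $p_j \mapsto q_{t_j} \in [p_j,\, p_j + d] \cap B_{i+d}$; when $|B_i| \ge |B_{i+d}|$ we instead pick an order-preserving injection $q_j \mapsto p_{s_j} \in [q_j - d,\, q_j] \cap B_i$. By Hall's theorem each such matching exists if and only if, for every consecutive block of indices $[j_1, j_2]$, the union of the corresponding windows contains at least $j_2 - j_1 + 1$ points of the relevant set.

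The main obstacle, and the technical heart of the proof, is verifying Hall's condition from the combinatorial description of $B_i$. The subtlety is that $B_i$ may be disconnected, with gaps arising where no generator of $I$ is ``active'' on the antidiagonal $p+q=i$, and these gaps can open, close, or shift as $i$ varies. The key structural observation is that each generator's contributing interval $[a_g,\, i - b_g]$ extends to $[a_g,\, i + d - b_g]$ in $B_{i+d}$ --- a rightward extension by exactly $d$ --- which supplies precisely the new points needed to accommodate the matching. One then checks that the two-sided cutoffs coming from $J = (x^a, y^b)$ trim the intervals in a controlled manner, and that adjacent generator-intervals which merge in $B_{i+d}$ but are disjoint in $B_i$ (or vice versa) can only improve the Hall counts. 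Managing this bookkeeping uniformly across all generators, all degrees $i$, and all gaps $d$ is the delicate step.
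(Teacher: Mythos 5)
Your reduction is the right one and runs parallel to the paper's proof up to the final step: you set up the same matrix of binomial coefficients $\binom{d}{q-p}$ with respect to the monomial bases, and you correctly observe that, by the Lindstr\"om--Gessel--Viennot framework behind Corollary \ref{gvcor}, maximal rank follows once one exhibits an order-preserving matching between the basis monomials of $(I/J)_i$ and $(I/J)_{i+d}$ with $q-p \in [0,d]$ throughout. The gap is that you never establish that such a matching exists. You reformulate its existence as a Hall condition on a union of shifted, truncated intervals and then declare the verification to be ``the delicate step'' --- but that verification is the entire content of the theorem. Moreover, the structural claims you lean on are not justified as stated: the generator interval $[a_g,\, i-b_g]$ does not simply gain $d$ usable points on the right, because the Artinian cutoff $\max(0,\, i+d-b+1)$ also advances by $d$ and trims points off the left at the same time; and the assertion that merging or splitting of generator-intervals ``can only improve the Hall counts'' is given without argument. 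Until Hall's condition is actually checked uniformly in $i$, $d$, and the generators of $I$, this is a proof outline, not a proof.

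For comparison, the paper sidesteps this bookkeeping entirely by two devices you do not use. First, it does not delete the columns indexed by degree-$(i+d)$ monomials outside $I$: those columns are identically zero (every monomial of $\ell^d\cdot x^py^{i-p}$ is a multiple of $x^py^{i-p}$, hence in $I$), so they do not affect the rank, and keeping them makes every row of the matrix a \emph{consecutive} window $\binom{d}{k_n}, \binom{d}{k_n+1},\dots,\binom{d}{k_n+c-1}$ of a row of Pascal's triangle, with $k_n$ strictly decreasing. Second, repeated column additions via Pascal's rule turn the $n$th row into $\binom{d+c-j}{k_n+c-1}$, $j=1,\dots,c$, which is literally a matrix of the form in Corollary \ref{gvcor}. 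At that point the only input needed is that every row is nonzero --- which follows from the Gorenstein property of $S/J$ by multiplying up to the socle $x^{a-1}y^{b-1}$ --- since this gives $k_n+c-1\geq 0$ and $d\geq k_n$, and the strict decrease of the $k_n$ then forces the diagonal of the leading maximal square submatrix to be positive. In effect, the matching you are trying to construct by Hall's theorem becomes the identity matching for free after the change of basis. If you wish to salvage your route you must carry out the Hall verification in full; the cleaner fix is to keep the zero columns and perform the column operations.
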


\begin{proof}
    Let $\ell = x+y$. We will show that $\ell$ is a strong Lefschetz element for the nonzero module $M := I/J$. Consider the multiplication map $\times \ell^d \colon M_i \to M_{i+d}$ for some fixed $i$ and $d$. We may assume that $i > 0$, since either $\text{dim}_k M_0 = 0$ and the map trivially has maximal rank, or $\text{dim}_k M_0 = 1$ and $M = S/J$ which is already known to have the SLP. Similarly, we assume $i< a+b-2$, where $a+b-2$ is the socle degree of both $S/J$ and $M$, else $\text{dim}_k M_{i+d} = 0$. In fact, we  may assume that $1 < i+d \leq a+b-2$ so that $\text{dim}_k M_{i+d} \neq 0$. 
    
    Now for any such $i$ and $d$, the multiplication map $\times \ell^d \colon S_i \to S_{i+d}$ can be represented by a matrix of binomial coefficients; for convenience, and because we are concerned only with its rank, we will work with the transposed matrix 
    \begin{equation*}
    \begin{blockarray}{ccccccccc}
    & x^{i+d} & x^{i+d-1}y & x^{i+d-2}y^2 &\dots &x^iy^d &x^{i-1}y^{d+1} &\dots & y^{i+d} \\[6pt]
    \begin{block}{c[cccccccc]}
    x^i & \binom{d}{0} & \binom{d}{1} & \binom{d}{2} & \dots & \binom{d}{d} & 0& \dots &0\\[6pt]
    x^{i-1}y & 0& \binom{d}{0}& \binom{d}{1} & \dots & \binom{d}{d-1} & \binom{d}{d} &\dots &0 \\[6pt]
    \vdots & \vdots &\vdots &\vdots  &\vdots  &\vdots  &\vdots  &\vdots  &\vdots \\[6pt]
    y^i & 0 & 0 & 0 &\dots &\binom{d}{d-i} &\binom{d}{d-i+1} &\dots &\binom{d}{d} \\[6pt]
    \end{block}
    \end{blockarray} 
    = A.
    \end{equation*}
    This matrix has maximal rank because $\ell$ is (trivially) a strong Lefschetz element for $S$. Now consider the matrix $A'$ representing the multiplication map $\times \ell^d \colon [S/J]_i \to [S/J]_{i+d}$, which can be derived from $A$ by first deleting the first $m_1 = \text{max}\{i+d-a+1, 0\}$ columns and the last $m_2 = \text{max}\{i+d-b+1,0\}$ columns to get the following matrix:
    
    \begin{equation*}
    \begin{blockarray}{cccccccc}
    & x^{i+d-m_1}y^{m_1} & x^{i+d-m_1-1}y^{m_1+1} & \dots & x^iy^d & x^{i-1}y^{d+1}& \dots & x^{m_2}y^{i+d-m_2} \\[6pt]
    \begin{block}{c[ccccccc]}
    x^i & \binom{d}{m_1} & \binom{d}{m_1+1} & \dots & \binom{d}{d} & \binom{d}{d+1} & \dots & \binom{d}{i+d-m_2} \\[6pt]
    x^{i-1}y & \binom{d}{m_1-1} & \binom{d}{m_1} & \dots & \binom{d}{d-1} & \binom{d}{d} & \dots & \binom{d}{i+d-m_2-1} \\[6pt]
    \vdots & \vdots  & \vdots  & \vdots  & \vdots  & \vdots  & \vdots  & \vdots \\[6pt]
    y^i & \binom{d}{m_1-i} & \binom{d}{m_1-i+1} & \dots & \binom{d}{d-i} & \binom{d}{d-i+1} & \dots & \binom{d}{d-m_2} \\[6pt]
    \end{block}    
    \end{blockarray}
    \end{equation*}
    Then deleting certain rows of the above matrix if the corresponding monomials are in $J$ gives us $A'$. Notice that some of the entries of $A'$ may be zero, since $\binom{d}{k} = 0$ if $k<0$ or $k>d$. 
    
    The matrix $A'$ also has maximal rank since $S/J$ is an Artinian monomial algebra in two variables, so $\ell$ is a strong Lefschetz element for $S/J$. In fact, $A'$ has at least one nonzero element in each row regardless of the size of the matrix. This follows from the fact that $S/J$ is Gorenstein. To show this directly, suppose $x^r y^s \not\in J$ is a nonzero monomial labeling a row of $A'$.  Consider $(x^r y^s)(x^{a-1-r} y^{b-1-s}) = x^{a-1}y^{b-1}$, which is nonzero since $x^{a-1}y^{b-1}$ is the socle of $S/J$. Because of our assumptions on the size of $d$ and $i+d$, any degree $d$ monomial factor of $x^{a-1-r} y^{b-1-s}$ multiplied by $x^ry^s$ corresponds to a column with a nonzero entry in the row in question.


    Now we can get the matrix $B$ representing the map $\times \ell^d\colon M_i \to M_{i+d}$ by deleting rows of the matrix for $S/J$ corresponding to monomials of $S_i$ which are not in $I$, i.e., rows corresponding to monomial labels $x^s y^r$ where, for every minimal monomial generator $x^py^q$ of $I$, $s<p$ or $r<q$. Note that there is no need to delete columns indexed by monomials not in $I$, since $I$ is an ideal so the corresponding matrix entry is already zero. 

    The $n$th row of the matrix $B$ can be written as 
\[
r_n =  
\begin{bmatrix}
    \binom{d}{k_n} & \binom{d}{k_n+1} & \dots & \binom{d}{k_n + c-1}
\end{bmatrix}
\]
where $c$ is the number of columns of $B$, $m_1 - i \leq k_n \leq m_1$, and $k_i > k_j$ for $i<j$. We also have $k_n+c-1 \geq 0$, because each row of $A'$ and therefore $B$ has at least one nonzero entry, so at least one of $k_n, k_n+1, \dots, k_n+c-1$ must be nonnegative, and $k_n+c-1$ is the largest. Similarly, at least one of the binomial coefficients must be nonzero, so at least we must have $d \geq k_n$.

Now we use a series of column operations relying on the relation between binomial coefficients 
\[
\binom{n}{k} = \binom{n-1}{k-1} + \binom{n-1}{k}.
\]
We first add the second column to the first, so the $n$th row becomes
\[ 
\begin{bmatrix}
    \binom{d+1}{k_n+1} & \binom{d}{k_n+1} & \dots & \binom{d}{k_n + c-1}
\end{bmatrix}.
\]
We then add the third column to the second, then the second to the first again, and so on until we go through this process starting with the last column. The result of these column operations is the matrix $B'$ with rows of the form 
\[
r_n' =  
\begin{bmatrix}
    \binom{d+c-1}{k_n+c-1} & \binom{d+c-2}{k_n+c-1} & \dots & \binom{d}{k_n + c-1}
\end{bmatrix}
\]

i.e., the matrix

\begin{equation*}
B' =
    \begin{bmatrix}
        \binom{d+c-1}{k_1+c-1} & \binom{d+c-2}{k_1+c-1} & \dots & \binom{d}{k_1+c-1} \\[6pt]
        \binom{d+c-1}{k_2+c-1} & \binom{d+c-2}{k_2+c-1} & \dots & \binom{d}{k_2+c-1} \\[6pt]
        \vdots & \vdots &\vdots &\vdots \\[6pt]
        \binom{d+c-1}{k_s+c-1} &\binom{d+c-2}{k_s+c-1} & \dots & \binom{d}{k_s+c-1} \\[6pt]
    \end{bmatrix}.
\end{equation*}

 As stated previously, we have that $k_n+c-1 \geq 0$ and $d \geq k_n$ for all $n$. In particular, this implies $ \binom{d+c-1}{k_1+c-1} \geq 0$. It follows that the difference $(d+c-1) - (k_1+c-1) = d - k_1 \geq 0$. Then since $k_1 > k_2 > \dots > k_s$ and $k_n+c-1 \geq 0$ for all $n$, we also get $d - (n-1) - k_n \geq 0$, hence $\binom{d+c-n}{k_n+c-1} \geq 0$. From this we see that at least the entries on the main diagonal of the leftmost and uppermost maximal square submatrix of $B'$ are nonzero. Reversing the order of the rows and columns of $B'$, we get a matrix where its rightmost and lowermost maximal square submatrix is of the form in Corollary \ref{gvcor} and has a nonzero main diagonal. It follows from the Lindstr{\"o}m-Gessel-Viennot Lemma via Corollary \ref{gvcor} that $B'$ has a nonzero maximal minor, hence $B'$ and therefore $B$ has maximal rank. We have thus shown that $x+y$ is a strong Lefschetz element for $M$. 
\end{proof}

In Section \ref{appsect}, we will study codimension three $k$-algebras using modules of the form in the following corollary, which follows directly from the preceding theorem.

\begin{corollary}\label{cor}
    Let $I = (x^{\alpha}, y^{\beta})$ and $J = (x^a, y^b)$ be ideals in $S$ where $0 \leq \alpha \leq a$ and $0 \leq \beta \leq b$. Then $I/J$ has the SLP as an $S$-module. 
\end{corollary}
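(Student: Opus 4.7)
The plan is to deduce Corollary \ref{cor} as an immediate specialization of Theorem \ref{mainmodulethm}. The theorem hypotheses are that $J$ is a monomial complete intersection of the form $(x^a, y^b)$ and that $I$ is some monomial ideal containing $J$; the conclusion is then SLP of $I/J$. Since $J$ in the corollary already has the required shape, the only thing to verify is that $I = (x^\alpha, y^\beta)$ is (a) a monomial ideal and (b) contains $J$.

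First I would observe that $I$ is monomial by construction. For the containment, I would use the hypotheses $0 \leq \alpha \leq a$ and $0 \leq \beta \leq b$: since $a - \alpha \geq 0$, we have $x^a = x^{a-\alpha} \cdot x^\alpha \in (x^\alpha) \subset I$, and symmetrically $y^b \in I$. Thus both generators of $J$ lie in $I$, so $J \subset I$, and Theorem \ref{mainmodulethm} yields the SLP for $I/J$.

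The one subtlety worth flagging is the degenerate case $\alpha = 0$ or $\beta = 0$, where $I$ contains a unit and so $I = S$; then $I/J = S/J$ is the Clements-Lindström ring $k[x,y]/(x^a, y^b)$, whose SLP is classical and is also the base case invoked inside the proof of Theorem \ref{mainmodulethm}. So no additional argument is needed, and I do not anticipate any obstacle: the corollary is genuinely a one-line consequence, whose only content is checking the inclusion of generators.
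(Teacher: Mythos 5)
Your proposal is correct and matches the paper, which simply states that the corollary follows directly from Theorem \ref{mainmodulethm}; the containment check $x^a = x^{a-\alpha}\cdot x^{\alpha} \in I$ (and similarly for $y^b$) is exactly the verification needed. The remark about the degenerate case $\alpha = 0$ or $\beta = 0$ is a sensible extra precaution but not required.
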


It is not necessarily true that similar modules in three variables have the SLP.

\begin{example} The $k[x,y,z]$-module $I/J$ where $I = (x^2,y^2,z^2)$ and $J = (x^3,y^3,z^3)$ does not have the strong Lefschetz property, although it does have a unimodal Hilbert series. Indeed, the map $\times (x+y+z) \colon [I/J]_3 \to [I/J]_4$ fails to be injective since $x^2(y-z)+y^2(z-x)+z^2(x-y)$ is in the kernel. But $\text{dim}_k [I/J]_3 = \text{dim}_k [I/J]_4$, so $I/J$ does not even have the weak Lefschetz property.  
\end{example}

Using the work of Lindsey (\cite{lindsams}), however, we can study the Lefschetz properties for $k[x,y,z]$-modules which are tensor product extensions of those in Corollary \ref{cor}. We provide below the relevant results and refer the reader to \cite{lindsams} or \cite{lindsey-thesis} for more details. 

\begin{definition}[\cite{lindsams}]
    Let $M = \bigoplus_{j=p}^q M_i$ be a graded Artinian module over $k[x_1,\dots,x_n]$ where $ 0 \leq q \leq p$ and $M_p$ and $M_q$ are both nonzero. We say that the Hilbert series of $M$ is \textit{almost centered} if 
    $h_{p+i-1} \leq h_{q-i} \leq h_{p+i}$ for all $1 \leq i \leq \lfloor \frac{q-p}{2} \rfloor$ or if $h_{q-i+1} \leq h_{p+i} \leq h_{q-i}$ for all  $1 \leq i \leq \lfloor \frac{q-p}{2} \rfloor$.
    For convenience we sometimes simply say that $M$ is almost centered if the above is true. 
\end{definition} 

Notice that if $M$ is almost centered, so is any shifted version of it.

\begin{lemma}[\cite{lindsams}]\label{ac-thm}
    Let $M$ be as above and suppose that $M$ has the strong Lefschetz property. Then $M \otimes_k k[z]/(z^c)$ has the strong Lefschetz property for all $c \geq 0$ if and only if the Hilbert series of $M$ is almost centered. Furthermore, if the Hilbert series of $M$ is almost centered, then the Hilbert series of $M \otimes_k k[z]/(z^c)$ is also almost centered. 
\end{lemma}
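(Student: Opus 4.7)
My plan is to decompose $M$ via the Jordan structure of its strong Lefschetz element and transfer the question to a direct sum of two-variable Artinian monomial complete intersections. Fix a strong Lefschetz element $\ell \in k[x_1,\dots,x_n]_1$ for $M$. Viewed as a $k[\ell]$-module, $\ell$ acts on $M$ in maximal Jordan block form, so one can write $M = \bigoplus_j k[\ell]/(\ell^{e_j})[-s_j]$ for integers $e_j \geq 1$ and shifts $s_j$. Tensoring distributes across the direct sum, giving
\[
M \otimes_k k[z]/(z^c) \;=\; \bigoplus_j k[\ell, z]/(\ell^{e_j}, z^c)[-s_j].
\]
Each summand is an Artinian monomial complete intersection in two variables, so by Theorem \ref{mainmodulethm} (applied with the ambient ring $k[\ell,z]$ as $I$) it has the SLP with $\ell + z$ as a strong Lefschetz element, and its symmetric Hilbert series has reflecting degree $r_j(c) = s_j + (e_j + c)/2 - 1$.

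By Remark \ref{directsumlp}, the common linear form $\ell + z$ is a strong Lefschetz element for $M \otimes_k k[z]/(z^c)$ if and only if the numbers $r_j(c)$ pairwise coincide in the sense of that remark, i.e.\ differ by $0$ or $\pm 1/2$. Since $c$ enters $r_j(c)$ only as the common additive term $c/2$, this coincidence is independent of $c$: it reduces to the requirement that the Jordan block centers $\mu_j := s_j + (e_j-1)/2$ pairwise differ by at most $1/2$. Both directions of the claimed equivalence will therefore reduce to identifying the "block centers coincide" condition on $M$ with the "$H_M$ is almost centered" condition.

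This identification is the main step and the principal obstacle. The SLP property of $M$ at $\ell$ forces the Jordan intervals $[s_j, s_j + e_j - 1]$ to be nested around a near-common center; consequently the defect between $h_{p+i}$ and $h_{q-i}$ at symmetric positions is controlled precisely by which block centers $\mu_j$ sit at $(p+q)/2$ versus at $(p+q)/2 \pm 1/2$. I would verify directly, block by block, that the $\mu_j$ pairwise agree up to $\pm 1/2$ if and only if the inequalities $h_{p+i-1} \leq h_{q-i} \leq h_{p+i}$ (or their mirror image) hold for $1 \leq i \leq \lfloor (q-p)/2 \rfloor$. Finally, the stability assertion that $H_{M \otimes k[z]/(z^c)}$ is almost centered whenever $H_M$ is then follows from the observation that convolving an almost centered sequence with the constant window $(1,1,\dots,1)$ of length $c$ preserves the property: a one-step discrepancy at reflected positions of $h$ translates uniformly through the convolution to a one-step discrepancy at reflected positions of $h \ast (1,\dots,1)$.
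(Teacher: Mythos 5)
First, note that the paper does not prove this lemma at all: it is quoted from \cite{lindsams}, so there is no in-paper argument to measure yours against. Judged on its own terms, your reduction of $M\otimes_k k[z]/(z^c)$ to a direct sum of strings $k[\ell,z]/(\ell^{e_j},z^c)[-s_j]$ is a sensible opening move, but the pivotal step is wrong. You assert that $M\otimes_k k[z]/(z^c)$ has the SLP \emph{if and only if} the reflecting degrees $r_j(c)$ pairwise coincide, citing Remark \ref{directsumlp}. That remark only supplies sufficiency here: coincidence of reflecting degrees is not necessary for a direct sum of symmetric SLP strings to have the SLP, because a map $\times\ell^d$ can be surjective on one summand while being bijective on another summand whose Hilbert function is flat in the relevant range, and the sum is then still of maximal rank. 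For instance $k[\ell]/(\ell^5)\oplus\bigl(k[\ell]/(\ell)\bigr)(-1)$ has the SLP even though its two blocks have reflecting degrees $2$ and $1$; the same example refutes your later claim that the SLP of $M$ at $\ell$ forces the Jordan intervals to be nested around a near-common center.

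The damage is not cosmetic. From the false equivalence you conclude that whether $M\otimes_k k[z]/(z^c)$ has the SLP is ``independent of $c$,'' which would make the quantifier ``for all $c\ge 0$'' in the lemma vacuous. The paper's own Remark \ref{badtensorex} is a direct counterexample: $(x^2,y^2)/(x^4,y^4)$ has the SLP (so the tensor product has it for $c=1$), yet $(x^2,y^2)/(x^4,y^4,z^3)$ does not. The real content of the lemma is exactly that the almost-centered condition on $H_M$ is what guarantees survival of the SLP for \emph{every} $c$, while for a fixed $c$ the SLP can hold or fail without it --- and this is precisely what your argument cannot see. A correct proof along your lines would have to replace ``reflecting degrees coincide'' by the actual maximal-rank criterion for a direct sum of strings (each summand surjective-or-bijective, or each injective-or-bijective, in every bidegree $(i,i+d)$), analyze how that criterion degrades as $c\to\infty$, and only then match the resulting condition on the pairs $(s_j,e_j)$ against the almost-centered inequalities. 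The closing ``furthermore'' claim about convolution with $(1,\dots,1)$ preserving almost-centeredness is plausible but is asserted rather than proved.
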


If the Hilbert series of $M$ is symmetric, it is also almost centered. It follows that the strong Lefschetz property in the narrow sense is preserved when tensoring with an algebra of the form $k[z]/(z^c)$. In more generality, Watanabe proved in Corollary 3.5 of \cite{wat-tensor} that the tensor product of any two $k$-algebras with the SLP in the narrow sense also has the SLP in the narrow sense. 

We may now utilize Lemma \ref{ac-thm} to study extensions of our modules of interest which will be used later in Section \ref{appsect}.

\begin{theorem}\label{tensoredmodules}
    Let $M = (x^{\alpha}, y^{\beta})/ (x^a, y^b,z^c)$ be a module over $S = k[x,y,z]$ where $0 \leq \alpha \leq a$ and $0 \leq \beta \leq b$. Then $M$ has the SLP for all $c \geq 0$ if  $\text{min}\{\alpha, \beta\} < \text{max}\{\alpha, \beta\} \leq 2$ or if $\text{min}\{\alpha, \beta\} < \text{max}\{\alpha, \beta\} = \text{min}\{\alpha+\beta, a,b\}$.
\end{theorem}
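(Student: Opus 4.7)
The plan is to realize $M$ as a tensor product and invoke Lemma \ref{ac-thm}. Since $z^c$ annihilates $M$, we may regard $M$ as a module over $S/(z^c) \cong R \otimes_k k[z]/(z^c)$ with $R = k[x,y]$; writing $I = (x^\alpha, y^\beta)$ and $J = (x^a, y^b)$ as ideals of $R$, one obtains the graded $S$-module isomorphism $M \cong N \otimes_k k[z]/(z^c)$, where $N := I/J$, $x$ and $y$ act on the first factor, and $z$ acts on the second. By Corollary \ref{cor}, $N$ has the SLP as an $R$-module, and by Lemma \ref{ac-thm} the module $N \otimes_k k[z]/(z^c)$ has the SLP for all $c \geq 0$ precisely when the Hilbert series of $N$ is almost centered. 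It therefore suffices to verify the almost-centered condition for $N$ under each of the two stated hypotheses.

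The Hilbert series of $N$ equals $H_{R/J}(t) - H_{R/I}(t)$, each summand being a unimodal symmetric trapezoidal polynomial. In Case 1, assume without loss of generality $\alpha < \beta$. If $\alpha = 0$ then $I = R$ and $N = R/J$ has a symmetric Hilbert series. Otherwise $(\alpha, \beta) = (1, 2)$: when $a = 1$, $N$ is isomorphic to $(y^2) \subseteq k[y]/(y^b)$, whose Hilbert series $t^2 + t^3 + \cdots + t^{b-1}$ is symmetric; when $a \geq 2$, one has $H_N(d) = H_{R/J}(d)$ for $d \geq 2$, $H_N(0) = 0$, and $H_N(1) = 1$, and the almost-centered inequalities $h_{p+i-1} \leq h_{q-i} \leq h_{p+i}$ with $p = 1$ and $q = a+b-2$ reduce, via the symmetry of $H_{R/J}$ about $(a+b-2)/2$, to the unimodality inequalities $H_{R/J}(i) \leq H_{R/J}(i+1)$ for $1 \leq i \leq \lfloor (a+b-3)/2 \rfloor$. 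These hold because $\lfloor (a+b-3)/2 \rfloor \leq \max(a,b) - 2$, keeping $i$ within the weakly increasing portion of the trapezoid.

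In Case 2, assume without loss of generality $\alpha < \beta$, so $\beta = \min\{\alpha+\beta, a, b\}$. For this minimum to be realized by $\beta$, at least one of $\alpha = 0$, $a = \beta$, or $b = \beta$ must hold. The subcase $\alpha = 0$ again gives $N = R/J$. For $a = \beta$ (the subcase $b = \beta$ being interchangeable via the $x \leftrightarrow y$ symmetry), the monomial basis of $N$ inside the $a \times b$ box splits as the disjoint union of the rectangles $[\alpha, a-1] \times [0, b-1]$ and $[0, \alpha-1] \times [a, b-1]$, and a direct computation shows both have shifted symmetric trapezoidal Hilbert series centered at the common degree $(\alpha + a + b - 2)/2$, so $H_N$ is symmetric. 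In every subcase $H_N$ is almost centered, and Lemma \ref{ac-thm} closes the argument. The principal obstacle is precisely this case-by-case Hilbert-series analysis; once $N$ is shown to be almost centered, the tensor decomposition and the invocations of Corollary \ref{cor} and Lemma \ref{ac-thm} are essentially formal.
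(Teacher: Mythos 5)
Your proposal is correct and follows essentially the same route as the paper: realize $M$ as $(I/J)\otimes_k k[z]/(z^c)$, invoke Corollary \ref{cor} for the SLP of $I/J$, and reduce to checking that $I/J$ is almost centered so that Lemma \ref{ac-thm} applies. The only difference is in how the almost-centered condition is verified---your reduction of the $\max\{\alpha,\beta\}\le 2$ case to unimodality of the trapezoidal Hilbert function of $R/J$, and the rectangle decomposition proving symmetry in the other case, are cleaner than the paper's direct case analysis with the $\max$-function formula, but the underlying argument is the same.
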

\begin{proof}
     Let $I/J$ be as in Corollary \ref{cor} and consider the $h_i = \text{dim}_k [I/J]_i$. By straightforward counting of monomials in each degree we get that 
     \begin{equation*}
     \begin{split}
     h_i &= \text{max}\{i-\alpha +1, 0 \} + \text{max}\{i-\beta+1,0\} - \text{max}\{i-\beta - \alpha +1,0\} \\  & \text{\ \ \ } -  \text{max}\{i-a+1, 0\} - \text{max}\{i-b+1,0\} +\text{max}\{i-a-b+1,0\}.
     \end{split}
     \end{equation*}
        Notice that the last term in the above expression is always zero; we have included it only for the sake of clarity. 
        
     Observe that the Hilbert series of $I/J$ is symmetric (hence also almost centered) if and only if $\text{min}\{\alpha, \beta\} \neq \text{max}\{\alpha, \beta\} = \text{min}\{\alpha+\beta, a,b\}$. Since $M \cong I/J \otimes_k k[z]/(z^c)$, $M$ has the SLP for all $c \geq 0$ by Lemma \ref{ac-thm} if $\text{min}\{\alpha, \beta\} \neq \text{max}\{\alpha, \beta\} = \text{min}\{\alpha+\beta, a,b\}$. 

    Next we consider the case in which the Hilbert series of $I/J$ is not necessarily symmetric. Assume that $\text{min}\{\alpha, \beta\} = \alpha$ and suppose that $\alpha < \beta \leq 2$. We may also assume without loss of generality that $a \leq b$ and that, to avoid triviality, that $\alpha \geq 1$ and $\beta \geq 1$. We will show that for the Hilbert series of $I/J$ we have $h_{\alpha-1+i} \leq h_{q-i} \leq h_{\alpha+i}$ for all $1 \leq i \leq \lfloor \frac{q-\alpha}{2} \rfloor$ where $q = a+b-2$, hence $I/J$ is almost centered.

     First we prove the inequality $h_{q-i} \leq h_{\alpha+i}$ for all $1 \leq i \leq \lfloor \frac{q-\alpha}{2} \rfloor$. Replacing $\alpha$ by $1$, per our assumptions, we have
     \begin{equation*}
     \begin{split}
     h_{1+i} - h_{q-i} &= 2i+\beta -2a-b +4 - \text{max}\{i-a+2, 0\} -\text{max}\{i-b+2,0 \} 
     \\& \text{\ \ \ } +\text{max}\{a+b-\beta -i -2,0\}+\text{max}\{a-i-1,0\}.
     \end{split}
     \end{equation*}
     We proceed by cases. First note that $\text{max}\{a+b-\beta -i -2,0\} = a+b-\beta -i -2 \geq 0$, since $i \leq \lfloor \frac{q-\alpha}{2} \rfloor$ implies $i < b-1$.
    
     Suppose that $i> a-1$, so then $\text{max}\{a-i-1,0\} = 0$ and $\text{max}\{i -a +2,0\}  = i-a+2$. Suppose also that $i < b-2$, so $\text{max}\{i-b+2,0\} = 0$. Then we get 
     \begin{equation*}
        \begin{split}
        h_{1+i} - h_{q-i} &=  2i+\beta -2a-b +4 - (i-a+2) +(a+b-\beta -i -2) +(a-i-1) = 0
         \end{split}
     \end{equation*}
     so the Hilbert series satisfies the almost centered condition for $i$ in the specified range. 
     
     On the other hand if $i \geq b-2$, we get 
    $h_{1+i} - h_{q-i} =  b-i-2 \geq b - (b-2) -2 \geq 0$
     using again the fact that $i < b-1$. 

    Next assume that $i \leq a-1$, so $\text{max}\{a-i-1,0\} = a-i-1$. We split our considerations into two main cases, $i < a-2$ and $i \geq a- 2$.
    If $i < a-2$, $\text{max}\{i-a+2,0\} = 0$ and $\text{max}\{i - b+2,0\} = 0$. Then 
    \begin{equation*}
     \begin{split}
     h_{1+i} - h_{q-i} &=  2i+\beta -2a-b +4 +(a+b-\beta -i -2) +(a-i-1) = 1 \\     
     \end{split}
     \end{equation*}
     which is of course greater than zero. In the other case, if $i \geq a - 2$, first suppose $i < b-2$. Then $ h_{1+i} - h_{q-i} = a-i -1 \geq a-(a-1) -1 = 0$, as desired If instead $i \geq b-2$, we have $ h_{1+i} - h_{q-i} = a+b -2i -3 \geq 0$, where the last inequality follows from the fact that  $i \leq \lfloor \frac{q-\alpha}{2} \rfloor$. 

     We must now prove that $h_i = h_{\alpha-1+i} \leq h_{q-i}$ for all $1\leq i \leq \lfloor \frac{q-\alpha}{2} \rfloor$. We have
     \begin{equation*}
     \begin{split}
     h_{q-i} - h_{i} &= 2a +b - 3i - 3 -\text{max}\{a+b -\beta -i-2, 0 \} - \text{max}\{a-i-1,0\}\\
     & \text{\ \ \ } +\text{max}\{i-\beta, 0\} + \text{max}\{i -a+1,0\} + \text{max}\{i -b+1, 0\}.
     \end{split}
     \end{equation*}
     Through a similar discussion, with our assumptions, we can prove that $h_{q-i} - h_{\alpha-1+i} \geq 0$ in all possible cases. Thus the Hilbert series of $I/J$ is almost centered, and the result follows via Lemma \ref{ac-thm}.        
\end{proof}

\begin{remark}\label{badtensorex}
 Modules of the form in Theorem \ref{tensoredmodules} need not be almost centered in general. For example, consider the $k[x,y]$-module $M = (x^2,y^2)/(x^4,y^4)$ which has the Hilbert series $2t^2+4t^3+3t^4+2t^5+t^6$. It is not hard to see that this Hilbert series is not almost centered. So, from Theorem \ref{ac-thm} we know that the $k[x,y,z]$-module $N = (x^2,y^2)/(x^4,y^4,z^c)$ must fail the SLP for at least one positive integer $c$. Indeed, if $c = 3$, $N$ does not have the SLP. The multiplication map $\times (x+y+z)^3 \colon N_3 \to N_6$ is not injective although $\text{dim}_k N_3 = \text{dim}_k N_7$; the element $(x-y)(x^2+y^2)$ is mapped to zero. 
\end{remark}

Theorem \ref{mainmodulethm}, though stated for monomial ideals $I$, is true for non-monomial ideals as well. To prove this we need the following lemma:

\begin{lemma}\label{initiallemma}
    Let $J \subset I$ be homogeneous ideals (not necessarily monomial) in $S = k[x_1, \dots, x_n]$ with $J$ Artinian. For any term order $<$, if $\text{in}_< (I)/\text{in}_< (J)$ has the WLP (respectively, SLP) as an $S$-module, then $I/J$ has the WLP (respectively, SLP) as an $S$-module.
\end{lemma}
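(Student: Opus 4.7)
My plan is to use a Gr\"obner degeneration to realize $I/J$ as a generic fiber in a flat $k[t]$-family whose special fiber at $t = 0$ is $\operatorname{in}_<(I)/\operatorname{in}_<(J)$, and then invoke lower semicontinuity of matrix rank: if the multiplication-by-$\ell^d$ map has maximal rank on the special fiber, it has maximal rank on a dense open subset of fibers, hence on the fiber isomorphic to $I/J$.

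First I would fix a weight vector $w \in \mathbb{Z}^n_{\geq 0}$ refining the term order $<$ on the (finitely many) monomials appearing in Gr\"obner bases of $I$ and $J$ up through the top nonzero degree of $S/J$. Let $\tilde I, \tilde J \subset S[t]$ be the corresponding $w$-homogenizations. Standard Gr\"obner degeneration gives that $S[t]/\tilde I$ and $S[t]/\tilde J$ are graded flat $k[t]$-modules whose fibers at $t = 0$ are $S/\operatorname{in}_<(I)$ and $S/\operatorname{in}_<(J)$, and whose fibers at any $c \neq 0$ are isomorphic as graded $k$-algebras to $S/I$ and $S/J$ via the rescaling $x_i \mapsto c^{w_i} x_i$. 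Since the graded Hilbert functions agree on every fiber, the short exact sequence
\[
0 \longrightarrow \tilde J \longrightarrow \tilde I \longrightarrow \tilde I/\tilde J \longrightarrow 0
\]
forces each piece $(\tilde I/\tilde J)_i$ to be a free $k[t]$-module of rank $\dim_k [I/J]_i = \dim_k [\operatorname{in}_<(I)/\operatorname{in}_<(J)]_i$.

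Now suppose $\operatorname{in}_<(I)/\operatorname{in}_<(J)$ has the SLP (the WLP case is identical, restricted to $d=1$) with Lefschetz element $\ell \in S_1$. Viewing $\ell$ as an element of $S[t]_1$ with no $t$ dependence, multiplication by $\ell^d$ induces a $k[t]$-linear map $\mu_{i,d}\colon (\tilde I/\tilde J)_i \to (\tilde I/\tilde J)_{i+d}$ of free $k[t]$-modules whose fiber at $t = 0$ is multiplication by $\ell^d$ on $\operatorname{in}_<(I)/\operatorname{in}_<(J)$. By hypothesis that fiber has maximal rank, so some maximal minor of $\mu_{i,d}$ is a polynomial in $t$ not vanishing at $t = 0$, hence nonzero for all but finitely many $t$. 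Because $S/J$ is Artinian only finitely many pairs $(i,d)$ matter, so intersecting the finitely many bad sets yields a cofinite set of values $c \in k^{\times}$ at which every $\mu_{i,d}$ has maximal rank. Pulling back through the rescaling isomorphism $S/I \cong (S[t]/\tilde I)_{t = c}$ produces a linear form $\ell' \in S_1$ such that multiplication by $(\ell')^d$ on $[I/J]_i$ has maximal rank for all $(i,d)$, so $I/J$ has the SLP as an $S$-module.

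The main technical hurdle is setting up the flat family $\tilde I/\tilde J$ and identifying its fibers correctly: this requires that $w$-homogenization commutes with the relevant ideal and quotient operations, that the fibers at $t = 0$ and at $t \neq 0$ are as claimed, and that constancy of Hilbert functions upgrades $\tilde I/\tilde J$ to a graded $k[t]$-flat module. Once these foundations are in place the semicontinuity conclusion is automatic, and the secondary subtlety that the Lefschetz element may need to be pulled back through the scaling automorphism is handled uniformly by the one-parameter family of scalings $x_i \mapsto c^{w_i} x_i$.
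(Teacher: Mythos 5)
Your proposal is correct and follows essentially the same route as the paper: both pass to the one-parameter Gr\"obner degeneration given by $w$-homogenization in $S[t]$, use semicontinuity at $t=0$ (where the fiber is $\mathrm{in}_<(I)/\mathrm{in}_<(J)$), and identify the fibers at $t=c\neq 0$ with $I/J$ via the diagonal rescaling $x_i \mapsto c^{w_i}x_i$, with only finitely many pairs $(i,d)$ to control since $I/J$ is Artinian. The only real difference is in how the Lefschetz element is produced: you transport the given element $\ell$ through the rescaling automorphism to obtain an explicit $\ell'$ for $I/J$, whereas the paper keeps $\ell$ fixed but chooses it generically (so that the Hilbert functions $h_i(J+\ell^d\cdot I)$ are maximal) and compares Hilbert functions instead of tracking minors of the $k[t]$-linear maps; both ways of closing the argument are valid.
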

    \begin{proof}
Assume for some term order $<$ that $\text{in}_< (I)/\text{in}_< (J)$ has the SLP. Let $\omega = (\omega_1, \dots, \omega_n) \in \mathbb{Z}_{\geq 0}^n$ be an integral weight such that $\text{in}_{\omega}(J) = \text{in}_<(J)$ and $\text{in}_{\omega}(J) = \text{in}_<(J)$. We will show that for every degree $d$ there is a general linear form $\ell$ such that the multiplication map $\times \ell^d \colon [I/J]_i \to [I/J]_{i+d}$ has maximal rank for all $i$. This is equivalent to showing that $I/J$ has the SLP: since $I/J$ is Artinian we need to check only finitely many degrees $d$, hence the intersection of the Zariski open sets corresponding to each of these $\ell$ is nonempty, being finite.

Now $I$ and $J$ have the same Hilbert functions as $\text{in}_{\omega}(I)$ and $\text{in}_{\omega}(J)$, respectively, and $\times \ell^d$ has maximal rank on $\text{in}_{\omega}(I)/\text{in}_{\omega}(J)$. So, denoting the Hilbert function by  $h_i(\textrm{--}) = \text{dim}_k [\textrm{--}]_i$,  it is enough to show that for all $i$ we have, 
\[
h_i(J+\ell \cdot I) \geq h_i(\text{in}_{\omega}(J) + \ell^d \cdot \text{in}_{\omega}(I)).
\]

Let $t$ be a new indeterminate and let $\widetilde{f}$ represent the homogenization of the polynomial $f \in S$ with respect to $\omega$, i.e., $\text{deg}(x_i) = \omega_i$ for all $i$, using $t$ with $\text{deg}(t) = 1$. Then let $\widetilde{I} = \{ \widetilde{f} | f \in I \}$ and $\widetilde{J}  = \{ \widetilde{f} | f \in J \}$ be the homogenizations of $I$ and $J$ in $S[t]$. Denote by $\mathbb{A}(S_1)$ the affine space of $S_1$. Fix $U \subseteq \mathbb{A}(S_1)$, a nonempty Zariski open set of linear forms which are SLE (respectively, WLE) for $\text{in}_< (I)/\text{in}_< (J)$. Fix also $V \subseteq \mathbb{A}(S_1)$ a nonempty Zariski open set of linear forms $\ell$ such that, for every $d$ and for every $i$,  $h_i(J+\ell^d \cdot I)$ is as large as possible. Note that $U \cap V \neq \emptyset$, and let $\ell \in U \cap V$. 

Denote by $I_{\alpha}$ and $J_{\alpha}$ the ideals of $S \cong S[t]/(t-\alpha)$ obtained by evaluating $t$ at some $\lambda \in k$. For a general choice of $\lambda$, $h_i(J_{\lambda}+\ell^d \cdot I_{\lambda}) \geq h_i(J_0 + \ell^d \cdot I_0)$. Notice that $J_0 = \text{in}_{\omega}(J)$ and $I_0 = \text{in}_{\omega}(I)$. Furthermore let $D_{\lambda} \in GL_n(k)$ be the matrix 
\[  D_{\lambda} =
  \bracketMatrixstack{
    \lambda^{-\omega_1} & 0 & 0\\
    0 & \cddots & 0 \\
    0 & 0 & \lambda^{-\omega_n}
  }
\]
and notice that $I_{\lambda} = D_{\lambda} I$ and $J_{\lambda} = D_{\lambda} J$. Thus
\begin{align*}
h_i(J_{\lambda}+\ell^d \cdot I_{\lambda}) &= h_i(D_{\lambda}(J + D_{\lambda}^{-1}(\ell^d)\cdot I)) \\ &= h_i(J+D_{\lambda}^{-1}(\ell^d)\cdot I) = h_i(J+(D_{\lambda}^{-1}(\ell))^d\cdot I)   \\ &\leq h_i(J+\ell^d \cdot I)
\end{align*}
where the last inequality follows from the fact that $\ell \in V$. Thus we have the result for the SLP. For the WLP, we merely need to replace $d$ by $1$ in the proof for the SLP and the result follows.
    \end{proof}

Although the above is stated for ideals, it is also true for $I$ and $J$ graded submodules of a free graded $S$-modules, even without the condition that the colength of $J$ is finite. The proof is similar; for more details on the strategy of such a proof see Section $2$ of \cite{caviglia-murai}.

\begin{theorem}
    Every homogeneous ideal $I$ (not necessarily monomial) in a Clements-Lindstr{\"o}m ring $A$ of embedding dimension two has the SLP as an $A$-module.
\end{theorem}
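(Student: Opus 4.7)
The plan is to reduce the statement, via Lemma \ref{initiallemma}, to the monomial case already handled by Theorem \ref{mainmodulethm}. A Clements-Lindstr\"om ring of embedding dimension two is, by definition, of the form $A = S/J$ where $S = k[x,y]$ and $J = (x^a,y^b)$ is a monomial complete intersection. So a homogeneous ideal $I \subset A$ lifts to a homogeneous ideal $\widetilde{I} \subset S$ with $J \subseteq \widetilde{I}$, and $I \cong \widetilde{I}/J$ as $A$-modules. Since $J$ annihilates $\widetilde{I}/J$, every $S$-linear form acts through $A$, and so the Lefschetz-property question for $I$ over $A$ is literally the same question as the Lefschetz-property question for $\widetilde{I}/J$ over $S$: the same multiplication maps by powers of linear forms appear in either viewpoint.

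Next I would fix any term order $<$ on $S$ and apply Lemma \ref{initiallemma} to the pair $J \subseteq \widetilde{I}$. Because $J$ is already a monomial ideal, $\text{in}_<(J) = J$, and $\text{in}_<(\widetilde{I})$ is a monomial ideal in $S$ which still contains $J = (x^a,y^b)$. Thus the hypothesis of Lemma \ref{initiallemma} asks me to verify that the module
\[
\text{in}_<(\widetilde{I})/\text{in}_<(J) \;=\; \text{in}_<(\widetilde{I})/(x^a,y^b)
\]
has the SLP as an $S$-module.

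But this is precisely the setting of Theorem \ref{mainmodulethm}: we have a monomial ideal $\text{in}_<(\widetilde{I})$ in $S = k[x,y]$ containing the Artinian monomial complete intersection $(x^a,y^b)$, and the theorem guarantees the quotient has the SLP as an $S$-module. Pulling this conclusion back through Lemma \ref{initiallemma} gives that $\widetilde{I}/J$ has the SLP as an $S$-module, and then through the identification above we conclude that $I$ has the SLP as an $A$-module, completing the proof.

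There is essentially no serious obstacle here beyond bookkeeping: Lemma \ref{initiallemma} was designed for exactly this reduction, and Theorem \ref{mainmodulethm} handles every possible monomial initial ideal uniformly. The only point worth stating carefully is the equivalence of SLP over $S$ and SLP over $A$ for a module killed by $J$, which is immediate because $A_1$ and $S_1$ determine the same multiplication maps on such a module.
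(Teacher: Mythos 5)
Your argument is essentially the paper's own proof: reduce to the monomial case via Lemma \ref{initiallemma} (noting $\mathrm{in}_<(J)=J$ since $J$ is monomial) and then invoke Theorem \ref{mainmodulethm}, together with the observation that SLP over $A$ and over $S$ coincide for a module killed by $J$. The one point you skip is that the paper's definition of a Clements--Lindstr\"om ring of embedding dimension two allows $a$ or $b$ to be infinite, in which case $J$ is not Artinian and neither Lemma \ref{initiallemma} nor Theorem \ref{mainmodulethm} applies as stated; the paper dispatches this case in one sentence by observing that then $x$ or $y$ is a non-zerodivisor on $I/J$, so the SLP is trivial. Adding that remark (or explicitly restricting to the Artinian case) would make your proof complete.
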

\begin{proof}
    A Clements-Lindstr{\"o}m ring of embedding dimension two is of the form $A = k[x,y]/(x^a,y^b)$ where $0 \leq a \leq \infty$ and $0 \leq b \leq \infty$. For this proof we may assume that $A$ is Artinian, i.e., $a$ and $b$ are both finite, since otherwise $x$ or $y$ is a non zero-divisor on $I/(x^a,y^b)$ and then the SLP for $I/(x^a,y^b)$ as a $k[x,y]$-module (which is equivalent to the SLP for $I$ as an $A$-module) is trivial. The result then follows from Lemma \ref{initiallemma} and Theorem \ref{mainmodulethm}.
\end{proof}

\section{Monomial algebras of type two}\label{appsect}

Monomial Artinian level algebras were examined in the monograph \cite{monograph}, a main results of which was that such an algebra of type two in three variables has the WLP in characteristic zero. However, the WLP can fail if the algebra is not level. The non-level case has been studied by Cook and Nagel in \cite{cooknag}, in which they identified the only two possible forms for type two Artinian monomial algebras of codimension three, where $S = k[x,y,z]$: 
\begin{enumerate}
    \item $S/I$ where $I = (x^a, y^b, z^c, x^{\alpha} z^{\gamma})$ where $0< \alpha < a$ and $0< \gamma < c$;
    \item $S/I$ where $I = (x^a, y^b, z^c, x^{\alpha} z^{\gamma}, y^{\beta} z^{\gamma})$.
\end{enumerate}
Cook and Nagel gave a complete classification of the algebras of the above forms that have the weak Lefschetz property in characteristic zero in \cite{cooknag} using lattice paths, lozenge tilings, and perfect matchings. Their combinatorial method for studying the weak Lefschetz property cannot also be used to study the strong Lefschetz property, however, and so the SLP for these algebras is still open. 

Almost complete intersections, such as algebras of the first form, have been studied by many authors (\cite{seceleanu}, \cite{aci1}, \cite{aci2}, and \cite{aci4}, for example). Here we focus on the strong Lefschetz property for algebras of the second form. Using Corollary \ref{cor} and the method of central simple modules of Harima and Watanabe, we can prove the SLP for some cases of such algebras. 

\begin{theorem}
    Let $I = (x^a, y^b, z^c, x^{\alpha}z^{\gamma}, y^{\beta}z^{\gamma})$ be an Artinian monomial ideal in $S = k[x,y,z]$ where $0 < \alpha < a$, $0<\beta < b$, and $0< \gamma < c$. Then $S/I$ has the strong Lefschetz property if any of the following conditions hold:
    \begin{enumerate}
    \item $\alpha+\beta -1 \leq a+b -c \leq \alpha+\beta +1$;
    \item $min \{\alpha,\beta \} \neq max \{\alpha,\beta \} = min\{\alpha+\beta, a, b\}$ and \\ $max \{\alpha,\beta \} -\gamma -1 \leq a+b-c \leq max \{\alpha,\beta \} -\gamma +1$;
    \item $min \{\alpha,\beta \} < max \{\alpha,\beta \} \leq 2$ and $a+b+\gamma \leq c+2$.
    \end{enumerate}

\end{theorem}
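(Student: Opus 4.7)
The plan is to apply the central simple module (CSM) formalism of Theorem \ref{csm-nonsym} combined with Theorem \ref{assgr}, using the SLP results for two- and three-variable modules from Section \ref{mainsect}. For each of the three conditions I would choose a variable $\ell \in \{x,y,z\}$ as the linear form for the CSM decomposition, compute $V_{1,\ell}$ and $V_{2,\ell}$ explicitly, and verify that $\widetilde{V} = \widetilde{V}_{1,\ell} \oplus \widetilde{V}_{2,\ell}$ has the SLP; the theorems then transfer this to $A = S/I$.

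Since $I$ is monomial, the filtration $\{(0:\ell^k)+(\ell)\}$ is transparent. Taking $\ell = z$, the induced filtration of $A/(z) \cong k[x,y]/(x^a, y^b)$ has exactly two distinct nonzero steps, giving $V_{1,z} \cong k[x,y]/(x^\alpha,y^\beta)$ with $f_1 = c$ and $V_{2,z} \cong (x^\alpha,y^\beta)/(x^a,y^b)$ with $f_2 = \gamma$. As $k[x,y,t]$-modules,
\[
\widetilde{V}_{1,z} \cong k[x,y,t]/(x^\alpha, y^\beta, t^c), \qquad \widetilde{V}_{2,z} \cong (x^\alpha, y^\beta)/(x^a, y^b, t^\gamma),
\]
the first a monomial complete intersection and the second of the form in Theorem \ref{tensoredmodules}. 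An analogous computation with $\ell = x$ yields two monomial complete intersections $\widetilde{V}_{1,x} \cong k[y,z,t]/(y^b, z^\gamma, t^a)$ and $\widetilde{V}_{2,x} \cong k[y,z,t]/(y^\beta, z^{c-\gamma}, t^\alpha)(-\gamma)$.

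For condition (1), I would take $\ell = x$. Both $\widetilde{V}_{1,x}$ and $\widetilde{V}_{2,x}$ are three-variable monomial complete intersections, hence have SLP in the narrow sense, with reflecting degrees $(a+b+\gamma-3)/2$ and $(\alpha+\beta+c+\gamma-3)/2$. These coincide iff $|a+b-\alpha-\beta-c| \leq 1$, which is precisely condition (1), so Remark \ref{directsumlp} gives SLP for $\widetilde{V}$. For condition (2), I would take $\ell = z$; the first hypothesis of (2) puts $\widetilde{V}_{2,z}$ into the symmetric case of Theorem \ref{tensoredmodules}, giving it SLP in the narrow sense with reflecting degree $(\min\{\alpha,\beta\}+a+b+\gamma-3)/2$, and combined with the reflecting degree $(\alpha+\beta+c-3)/2$ of $\widetilde{V}_{1,z}$ these coincide exactly under the second hypothesis of (2). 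Again Remark \ref{directsumlp} concludes the argument.

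Condition (3) is the main obstacle, because $\widetilde{V}_{2,z}$ need not be symmetric and Remark \ref{directsumlp} does not apply directly. The key observation is that the hypothesis $\min\{\alpha,\beta\} < \max\{\alpha,\beta\} \leq 2$ forces $\alpha+\beta = 3$, so $\widetilde{V}_{1,z} \cong k[x,y,t]/(x^\alpha, y^\beta, t^c)$ has Hilbert function $(1,2,2,\ldots,2,1)$ on degrees $[0,c]$, which is flat in the interior; and the hypothesis $a+b+\gamma \leq c+2$ forces $\widetilde{V}_{2,z}$ to be supported in $[1, c-1]$. For a generic $\ell'$, multiplication by $(\ell')^d$ on $\widetilde{V}_{1,z}$ is therefore bijective whenever both source and target degrees lie in $[1, c-1]$, is strictly injective only when the source is degree $0$, and is strictly surjective only when the target is degree $c$; in the latter two boundary scenarios $\widetilde{V}_{2,z}$ has vanishing source or vanishing target and so imposes no constraint on the direct sum. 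A case analysis then shows the ranks on the two summands always add up to the maximal rank of $\times (\ell')^d$ on $\widetilde{V}$. The technical core of the proof is verifying this rank-additivity in all boundary degrees, and in particular ruling out configurations where $\widetilde{V}_{2,z}$ transitions between its injective and surjective phases at a degree where $\widetilde{V}_{1,z}$ is itself strictly injective or surjective rather than bijective.
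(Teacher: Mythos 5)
Your proposal is correct and follows essentially the same route as the paper: the same central simple module decompositions with respect to $x$ (for condition (1)) and $z$ (for conditions (2) and (3)), the same reflecting-degree comparisons via Remark \ref{directsumlp}, and the same ``flat peak'' argument for condition (3), where the support of $\widetilde{V}_{2,z}$ is confined to the degrees on which $\widetilde{V}_{1,z}$ has constant Hilbert function and the multiplication maps are bijective. The only cosmetic difference is that you make the observation $\alpha+\beta=3$ explicit, whereas the paper phrases the same condition as $\alpha+\beta-2\leq \min\{\alpha,\beta\}$.
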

\begin{proof} 
     We first consider the central simple modules $V_{i,x}$ of $A$ with respect to the linear form $x$ (or using the linear form $y$, which yields the same result). We get only two central simple modules:
    \begin{align*}
    V_{1,x} &= \frac{(0:x^a)+(x)}{(0:x^{a-1})+(x)} \cong \frac{k[y,z]}{(y^b,z^{\gamma})} \\
    V_{2,x} &= \frac{(0:x^{\alpha})+(x)}{(0:x^{\alpha-1})+(x)} \cong \frac{k[y,z]}{(y^{\beta}, z^{c-\gamma})}(-\gamma).
    \end{align*}
    Both of these modules, being monomial complete intersections, have the SLP in the narrow sense. We also have $f_1 = a$ and $f_2 = \alpha$, so $\widetilde{V}_{1,x}$ and $\widetilde{V}_{2,x}$ have symmetric Hilbert series with reflecting degrees 
    \begin{align*}
        r_{1,x} &= \frac{a+b+\gamma -3}{2} \\
        r_{2,x} &= \frac{\gamma + \alpha +\beta +c -3}{2} 
    \end{align*}
    which coincide if and only if 
    \[\alpha+\beta -1 \leq a+b -c \leq \alpha+\beta +1.\]
    It follows from Theorem \ref{csm-nonsym} and Remark \ref{directsumlp} that $S/I$ has the SLP if condition $(1)$ holds.

    We next consider the central simple modules $V_{i,z}$ of $S/I$ with respect to the linear form $z$, of which there are again only two: 
    \begin{align*}
        V_{1,z} = \frac{(0:z^c)+(z)}{(0:z^{c-1})+(z)} &\cong \frac{k[x,y]}{(x^{\alpha}, y^{\beta})} \\
        V_{2,z} = \frac{(0:z^{\gamma})+(z)}{(0:z^{\gamma -1})+(z)} &\cong \frac{(x^{\alpha}, y^{\beta})}{(x^a,y^b)} \text{ as a quotient of ideals in } k[x,y].
    \end{align*}    
   $V_{1,z}$ has both the SLP and a symmetric Hilbert series. We also have that \[\widetilde{V}_{1,z} = V_{1,z} \otimes k[t]/(t^c) \cong k[x,y,t]/(x^{\alpha}, y^{\beta}, t^c)\] 
    which has a symmetric Hilbert series that is nonzero starting in degree 0 until degree $\alpha +\beta +c -3$. 
    By Corollary \ref{cor}, $V_{2,z}$ also has the SLP. Taking the tensor product with $k[t]/(t^{\gamma})$, we see that 
    \[\widetilde{V}_{2,z} \cong \frac{(x^{\alpha}, y^{\beta})}{(x^a, y^b, t^{\gamma})} \text{ as a quotient of ideals in } k[x,y,t]\] 
    and $V_{2,z}$ is nonzero in degrees $\text{min}\{\alpha,\beta\}$ to $a+b+\gamma-3$. Its Hilbert series is symmetric if and only if $\text{min}\{\alpha, \beta\} \neq \text{max}\{\alpha, \beta\} = \text{min}\{\alpha+ \beta, a, b \}$, hence the same is true for $\widetilde{V}_{2,z}$. When this condition for symmetry is met, we can compare the reflecting degrees $r_{1,z}$ and $r_{2,z}$ of $V_{1,z}$ and $V_{2,z}$, respectively. We get
    \begin{align*}
    r_{1,z} &= \frac{\alpha +\beta +c -3}{2} \\
    r_{2,z} &= \frac{\text{min}\{\alpha,\beta\} + a+b+\gamma -3}{2}
    \end{align*}
    so $r_{1,z}$ and $r_{2,z}$ coincide if and only if $\text{max}\{\alpha, \beta\} -\gamma -1 \leq a+b-c \leq \text{max}\{\alpha, \beta\} - \gamma +1$. So $S/I$ has the SLP if condition $(2)$ holds.   

    It is tedious to determine precisely what the Hilbert series of $\widetilde{V}_{2,z}$ looks like when it is not symmetric, but we can still find some conditions which force $M$ to have the strong Lefschetz property. 

    Suppose that $c \geq \text{max}\{\alpha, \beta\}$. Then in all degrees $\alpha+\beta-2$ to $c-1$ or in all degrees $c-1$ to $\alpha+\beta-2$, depending on which is larger, $H_{\widetilde{V}_{1,z}}(t)$ is equal to its maximum value . First consider the case where $\alpha+\beta-2 \leq c-1$. For any i and d such that $\alpha+\beta-2 \leq i$ and $i+d \leq c-1$, then, $\times \ell^d \colon [\widetilde{V}_{1,z}]_i \to [\widetilde{V}_{1,z}]_{i+d}$ is bijective. Thus if the Hilbert series of $\widetilde{V}_{2,z}$ is nonzero only in degrees where $\widetilde{V}_{1,z}$ has a peak, $\ell$ is a strong Lefschetz element for $\widetilde{V} = \widetilde{V}_{1,z} \oplus \widetilde{V}_{2,z}$ (see Remark \ref{directsumlp}). This happens if and only if $\alpha +\beta -2 \leq \text{min}\{\alpha, \beta\}$, i.e., $\text{max}\{\alpha,\beta\} \leq 2$, and $a+b+\gamma -3\leq c-1$. By Theorem \ref{tensoredmodules} and Remark \ref{badtensorex}, we know that $\widetilde{V}_{1,z}$ may fail the SLP unless $\text{min}\{\alpha,\beta \} =1$. So we see that if condition $(3)$ of the proposition holds, $S/I$ has the SLP by Theorem \ref{csm-nonsym}. 
    
    On the other hand, if $c-1 \leq \alpha+\beta-2$, by a similar argument, $S/I$ has the SLP if $c-1 \leq \text{min}\{\alpha,\beta\}$ and $a+b+\gamma \leq \alpha+\beta+1$. The latter condition is impossible given the assumptions on the degrees of the generators of $I$.
\end{proof}

\bibliography{refm}
\bibliographystyle{acm}
\end{document}